\title{On $k$-(total) limited packing in graphs}
\date{}
\author {
Azam Sadat Ahmadi\ \ and Nasrin Soltankhah\thanks{Corresponding author}\vspace{2mm}\\
Department of Mathematics, Faculty of Mathematical Sciences, Alzahra University,\\ Tehran, Iran\vspace{1mm}\\
{\tt as.ahmadi@alzahra.ac.ir}\\
{\tt soltan@alzahra.ac.ir}\\
}
\date{}
\newtheorem{theorem}{Theorem}[section]
\newtheorem{corollary}[theorem]{Corollary}
\newtheorem{lemma}[theorem]{Lemma}
\newtheorem{proposition}[theorem]{Proposition}
\theoremstyle{definition}
\theoremstyle{remark}
\begin{document}

\maketitle

\begin{abstract}
A set $B\subseteq V(G)$ is called a $k$-total limited packing set in a graph $G$ if $|B\cap N(v)|\leq k$ for any vertex $v\in V(G)$. The $k$-total limited packing number $L_{k,t}(G)$ is the maximum cardinality of a $k$-total limited packing set in $G$. Here, we give some results on the $k$-total limited packing number of graphs  emphasizing trees, especially when $k=2$. We also study the $2$-(total) limited packing number of some product graphs. 

A $k$-limited packing partition ($k$LPP) of graph $G$ is a partition of $V(G)$ into $k$-limited packing sets. The minimum cardinality of a $k$LPP is called the $k$LPP number of $G$ and is denoted by $\chi_{\times k}(G)$, and we obtain some results for this parameter.
\end{abstract}
\textbf{2010 Mathematical Subject Classification:} 05C05, 05C69, 05C76\vspace{0.5mm}\\
\textbf{Keywords}: limited packing, $k$-limited packing partition number, graph products.
%%%%%%%%%%%%%%%%%%%%%%%%%%%%%%%%%%%%%%%%%%%%%%%

\section{Introduction and preliminaries} 

In this work, we consider $G=\big(V(G),E(G)\big)$ as a finite simple graph.
$N_G(v)$ and $N_G[v]=N_G(v)\cup \{v\}$ are used to refer to the {\em open neighborhood} and {\em closed neighborhood} of a vertex $v\in V(G)$, respectively.
The {\em minimum} and {\em maximum degrees} of a graph $G$ are denoted by $\delta(G)$ and $\Delta(G)$, respectively.
We refer to \cite{West} as a source for terminology and notation that is not explicitly defined here.

$G^{-}$ indicates the graph obtained from $G$ by removing its isolated vertices. By $G[S]$, we mean the subgraph induced by the subset $S$ of vertices in $G$.

A set of vertices $S\subseteq V(G)$ is called a \textit{dominating set} (DS) in $G$ if every vertex not in $S$ is adjacent to at least one vertex in $S$. The \textit{domination number} of $G$, denoted $\gamma(G)$, is the smallest number of vertices in a dominating set of $G$.
A set $S\subseteq V(G)$ is a \textit{total dominating set} (TDS) in the graph $G$ if every vertex in $V(G)$ is adjacent to a vertex of $S$. The \textit{total domination number} of $G$, denoted $\gamma_{t}(G)$, is the smallest number of vertices in a total dominating set of $G$.

A set of vertices $S\subseteq V(G)$ with $\delta(G)\geq k-1$ is said to be a \textit{$k$-tuple dominating set} ($k$TD set) in $G$ provided that for every $v\in V(G)$, we have $|N[v]\cap S|\geq k$ . The \textit{$k$-tuple domination number} $\gamma_{\times k}(G)$ of graph $G$ is the number of vertices in a smallest $k$TD set in $G$. A \textit{$k$-tuple domatic partition} ($k$TD partition) of a graph $G$ is a partition of the vertices of $G$ into $k$TD sets. The largest number of sets that can be obtained from a vertex partition of $G$ into $k$TD sets is called the \textit{$k$-tuple domatic number} and is denoted by $d_{\times k}(G)$. Notice that when $k=1$, $S$ and $\gamma_{\times1}(G)$ are the usual dominating set and domination number $\gamma(G)$, respectively. Additionally, $d_{\times1}(G)=d(G)$ refers to the well-studied domatic number (see \cite{ch}).

A vertex subset $B$ of a graph $G$ is called a \textit{packing} (resp. an \textit{open packing}) provided that $|B\cap N[v]|\leq 1$ (resp. $|B\cap N(v)|\leq 1$) for each  vertex $v\in V(G)$. The \textit{packing number} $\rho(G)$ and \textit{open packing number} $\rho_{o}(G)$ are defined as the maximum cardinality of a packing set and an open packing set, respectively. To obtain additional information on these concepts, the reader can refer to \cite{hhh} and \cite{hhs}.

In $2010$, the concept of limited packing (LP) in graphs was introduced by Gallant et al. \cite{gghr}. A \textit{$k$-limited packing} ($k$LP) in a graph $G$ is a set $B\subseteq V(G)$ such that for each vertex $v$ of $V(G)$, the cardinality of the intersection of $B$ and $N[v]$ is at most $k$. The maximum cardinality of a $k$-limited packing set in G is called the \textit{$k$-limited packing number} $L_{k}(G)$. They also presented some real-world applications of this concept in network security, market situation, NIMBY and codes. This topic was next investigated in numerous papers, such as references \cite{bcl}, \cite{gz} and \cite{s}. Similarly, a \textit{$k$-total limited packing} ($k$TLP) in $G$ is a set $B\subseteq V(G)$ such that for each vertex $v$ of $V(G)$, the cardinality of the intersection of $B$ and $N(v)$ is at most $k$. The maximum cardinality of a $k$-total limited packing set in G is called the \textit{$k$-total limited packing number} $L_{k,t}(G)$. This topic was initially studied in \cite{hms}, and some theoretical applications of it were given in \cite{ass, hmsv}. It is worth noting that the latter two concepts are identical to packing and open packing when $k$ equals $1$. Notice that a $k$LP set is a $k$TLP set, too.

For the \textit{cartesian product} of graphs $G$ and $H$, denoted $G\square H$, and the \textit{direct product} of graphs $G$ and $H$, denoted $G\times H$, the vertex set of the product is $V(G)\times V(H)$. Their edge sets are defined as follows. In $G\square H$, two vertices are adjacent if they are adjacent in one coordinate and equal in the other. In $G\times H$ two vertices are adjacent if they are adjacent in both coordinates.

Suppose that $G$ is a labeled graph on $n$ vertices, and $\mathcal{H}$ is a sequence of $n$ rooted graphs $H_1, H_2, \cdots, H_n$. If we identify the $i^{th}$ vertex of $G$ with the root of $H_i$, we obtain a new graph called the \textit{rooted product} graph. This graph is denoted by $G(\mathcal{H})$. We here focus on the special case of rooted product graphs for which $\mathcal{H}$ consists of $n$ isomorphic rooted graph. Assume that $v$ is the root vertex of $H$, we define the rooted product graph $G\circ_{v} H=(V,E)$, such that $V= V(G)\times V(H)$ and $$E=\displaystyle \bigcup_{i=1}^{n} \big\{(g_i,h)(g_i,h'):hh'\in E(H)\big\}\cup \big\{(g_i,v)(g_j,v):g_ig_j\in E(G)\big\}.$$

For $g\in V(G)$, $h\in V(H)$ and $\ast \in \{\square, \times, \circ_{v}\}$, we call $G^h=\{(g,h)\in V(G\ast H)|g\in V(G)\}$ a $G$-layer through $h$, and ${}^g\!H=\{(g,h)\in V(G\ast H)|h\in V(H)\}$ an $H$-layer through $g$ in $G\ast H$.

Notice that the subgraphs induced by the $H$-layers (resp. the $G$-layers) of $G\circ_{v} H$ (or $G\square H$) are isomorphic to $H$ (resp. to $G$). However, there are no edges between the vertices of $G^h$ and the vertices of ${}^g\!H$ in direct product $G\times H$.

 A $k$-\textit{limited packing partition} ($k$LPP) of a graph $G$ is a partition of the vertices of $G$ into $k$LP sets. The smallest number of sets that can be obtained from a vertex partition of $G$ into $k$LP sets is called the \textit{$k$-limited packing partition number} ($k$LPP number) and is denoted by $\chi_{\times k}(G)$. This concept can also be considered as the dual of $k$TD partition problem. Our main focus for $k$TLP sets is on $k=2$. This is because for larger values of $k$, we lose some significant families of graphs (for instance, $\gamma_{\times k}$ and $d_{\times k}$ cannot be defined for trees when $k\geq3$) or we encounter trivial problems (for instance, $L_{\times k}(G)=|V(G)|$ and $\chi_{\times k}(G)=1$ if $k\geq \Delta(G)+1$). On the other side, many results for $k\in \{1,2\}$ may be generalized to the general case $k$. Inaddition, Stronger results may be obtained for small values of $k$. 

Here, we first discuss $k$TLP, especially when $k=2$, and give several sharp bounds for it. Then, we improve some of these inequalities for trees. In Section $3$, we bound $L_{2}$ and $L_{2,t}$ for the cartesian product, direct product and rooted product graphs .
In Section $4$, we give a lower bound for $\chi_{\times k}$, and determine the values of $\chi_{\times 2}$ for the corona product.
For the sake of convenience, for any graph $G$ by an $\eta(G)$-set with $\eta\in \{L_{k},\gamma_{t},\rho,\rho_{o},L_{k,t}\}$ we mean a $k$LP set, TD set, packing set, open packing set and $k$TLP set in $G$ of cardinality $\eta(G)$, respectively.

%%%%%%%%%%%%%%%%%%%%%%%%%%%%%%%%%%%%%%%%%%%%%%%

\section{Results on $k$-total limited packing}
If $G$ is a graph of order $n$ and $k\geq n-1$, then $L_{k,t}(G)=n$.
Note that $k\geq \Delta(G)$ is a weaker condition than the previous one. Therefore, we only need to compute the $k$TLP number for those graphs G such that $k<\Delta(G)$.

If $B\subseteq V(G)$ and $|B|=k$, then $|B\cap N(v)|\leq k$ for each vertex $v$ of $V(G)$. So, $k\leq L_{k,t}(G)\leq n$.\\
We give some upper bounds for the $k$TLP number of a graph in the following.

\begin{theorem}
Let $G$ be a graph of order $n\geq 2$ with degree sequence $d_1, d_2, \cdots, d_n$ such that $d_1\leq d_2\leq \cdots \leq d_n$. Then
$$L_{k,t} (G)\leq max~\{t|d_1+d_2+\cdots+d_t\leq kn\},$$
and this bound is sharp.
\end{theorem}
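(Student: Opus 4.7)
The plan is a direct double-counting argument. Let $B$ be an $L_{k,t}(G)$-set and write $t=|B|$. First I would sum the defining inequality $|B\cap N(v)|\le k$ over all $v\in V(G)$ to obtain $\sum_{v\in V(G)}|B\cap N(v)|\le kn$. Then, swapping the order of summation (each $u\in B$ contributes to $|B\cap N(v)|$ for exactly the $\deg(u)$ choices $v\in N(u)$), the left-hand side equals $\sum_{u\in B}\deg(u)$, so $\sum_{u\in B}\deg(u)\le kn$.

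Since $|B|=t$ and the degree sequence is sorted in non-decreasing order, the minimum possible value of $\sum_{u\in B}\deg(u)$ is $d_1+d_2+\cdots+d_t$; hence $d_1+\cdots+d_t\le kn$. Consequently, $t$ belongs to the set $\{s\mid d_1+\cdots+d_s\le kn\}$, so $t\le \max\{s\mid d_1+\cdots+d_s\le kn\}$, which is exactly the claimed bound.

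For sharpness I would exhibit any $k$-regular graph $G$ on $n\ge k+1$ vertices: every $d_i$ equals $k$, so the inequality $d_1+\cdots+d_s=sk\le kn$ holds for every $s\le n$ and the right-hand side of the bound evaluates to $n$; moreover $B=V(G)$ is itself a $k$TLP set since $|B\cap N(v)|=\deg(v)=k$, giving $L_{k,t}(G)=n$ and matching the bound. There is no real obstacle in this argument; the only point that deserves explicit mention is that the sum of degrees over any $t$-subset of $V(G)$ is at least $d_1+\cdots+d_t$, which is immediate from the non-decreasing ordering of the degree sequence.
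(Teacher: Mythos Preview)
Your proof is correct and follows essentially the same double-counting argument as the paper: summing $|B\cap N(v)|\le k$ over all $v\in V(G)$ and rewriting the left side as $\sum_{u\in B}\deg(u)$, then comparing with the smallest $t$ degrees. The only difference is the sharpness witness---the paper uses complete graphs $K_n$ with $n\ge k+2$ (where both sides equal $k$), while you use $k$-regular graphs (where both sides equal $n$); both choices are valid.
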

\begin{proof}
Let $\mathcal{B}=\{v_1, v_2, \cdots, v_{|\mathcal{B}|}\}$ be an $L_{k,t}(G)$-set. Then
$$d_1+d_2+\cdots+d_{|\mathcal{B}|}\leq deg(v_1)+ deg(v_2)+ \cdots+ deg(v_{|\mathcal{B}|})\leq k|\mathcal{B}|+k(n-|\mathcal{B}|).$$
So $d_1+d_2+\cdots+d_{|\mathcal{B}|}\leq kn$. Therefore, $L_{k,t}(G) \in \{t|d_1+d_2+\cdots+d_t\leq kn\}$.

The sharpness of this bound can be seen as follows. Suppose that $G$ is a complete graph of order at least  $k+2$. Then, it is easy to see that $L_{k,t}(G)=k$. On the other hand, $k=L_{k,t}(G)\leq max~\{t|t(n-1)\leq kn\}=k$. 
\end{proof}

\begin{lemma}\label{l9}
If $G$ is a graph of order $n$, then $L_{k,t}(G)\leq n+k-\Delta(G)$.
\end{lemma}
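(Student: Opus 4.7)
The plan is to exploit the $k$-total limited packing constraint at a single vertex of maximum degree; no global counting is needed.

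First I would fix a vertex $v \in V(G)$ with $\deg(v) = \Delta(G)$, and let $B$ be any $L_{k,t}(G)$-set. Applying the defining inequality of a $k$TLP set at $v$ gives $|B \cap N(v)| \leq k$. Since $|N(v)| = \Delta(G)$, this means that at least $\Delta(G) - k$ neighbors of $v$ lie outside $B$.

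Next I would translate this into a bound on $|B|$ by counting the complement. Those $\Delta(G) - k$ excluded neighbors are all distinct vertices of $V(G) \setminus B$, so
\[
n - L_{k,t}(G) \;=\; |V(G) \setminus B| \;\geq\; \Delta(G) - k,
\]
which rearranges immediately to $L_{k,t}(G) \leq n + k - \Delta(G)$.

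There is no real obstacle here: the argument is a one-line local counting step at a maximum-degree vertex, and I do not expect to need any case distinction based on whether $v \in B$ or not (since we are only bounding the complement of $B$ from below by neighbors of $v$). If the authors wish to note sharpness, one can briefly observe that stars $K_{1,n-1}$ give equality when $k < n-1$, since taking $B$ to be any $k$ leaves together with the center yields $L_{k,t}(K_{1,n-1}) = k+1 = n + k - \Delta$.
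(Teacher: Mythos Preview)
Your argument is correct and is essentially the same local counting at a maximum-degree vertex that the paper uses; the paper merely splits off the trivial case $k\ge\Delta(G)$ explicitly, whereas your formulation handles it uniformly since then $\Delta(G)-k\le 0$ and the complement bound is vacuous. Your sharpness remark about stars is a valid addition not present in the paper's statement of the lemma.
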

\begin{proof}
Assume that $w$ is a vertex of maximum degree in $G$. If $k\geq \Delta(G)$, then it is clear that $V(G)$ is a $k$TLP set of $G$. So, $L_{k,t}(G)=n\leq n+k-\Delta(G)$. Thus, we assume that $k< \Delta(G)$. Let $S$ be an $L_{k,t}(G)$-set. Since $|N(w)\cap S|\leq k$, there is at least $\Delta(G)-k$ vertices in $N(w)\backslash S$. Hence, $|\overline{S}|\geq \Delta(G)-k$. Therefore, we have $L_{k,t}(G)=|S|=n-|\overline{S}|\leq n+k-\Delta(G)$.
\end{proof}

We define the family $\Omega$ consisting of all graphs $G$ constructed as follows.\\
 Suppose that $G$ is a graph of order $n$ such that $V(G)=A\cup B$ has the following conditions:
\begin{itemize}
\item[(i)]
$|A\cap B|=3$,
\item[(ii)]
$G[A]$ has a spanning star, and each component of $G[B]$ is a path or a cycle,
\item[(iii)]
for every vertex $v\in \overline{B}$, we have $|N(v)\cap B|\leq 2$ .
\end{itemize}

Figure \ref{G} depicts a representative member of $\Omega$.

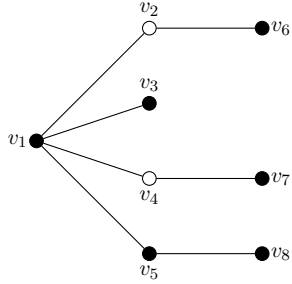
\begin{figure}[h]
\centering
\begin{tikzpicture}[scale=.5, transform shape]
\node [draw, shape=circle,fill=black] (v1) at (-1,0) {};
\node [draw, shape=circle] (v2) at (2,3) {};
\node [draw, shape=circle,fill=black] (v6) at (5,3) {};
\node [draw, shape=circle,fill=black] (v3) at (2,1) {};
\node [draw, shape=circle] (v4) at (2,-1) {};
\node [draw, shape=circle,fill=black] (v7) at (5,-1) {};
\node [draw, shape=circle,fill=black] (v5) at (2,-3) {};
\node [draw, shape=circle,fill=black] (v8) at (5,-3) {};

\node [scale=1.5] at (-1.5,0) {$v_1$};
\node [scale=1.5] at (2,3.5) {$v_2$};
\node [scale=1.5] at (2,1.5) {$v_3$};
\node [scale=1.5] at (2,-1.5) {$v_4$};
\node [scale=1.5] at (2,-3.5) {$v_5$};
\node [scale=1.5] at (5.5,3) {$v_6$};
\node [scale=1.5] at (5.5,-1) {$v_7$};
\node [scale=1.5] at (5.5,-3) {$v_8$};

\draw(v1)--(v2)--(v6);
\draw(v1)--(v3);
\draw(v1)--(v4)--(v7);
\draw(v1)--(v5)--(v8);
\end{tikzpicture}
\caption{A graph $H\in \Omega$ with $A=\{v_1,v_2,v_3,v_4,v_5\}$ and $B=\{v_1,v_3,v_5,v_6,v_7,v_8\}$.}\label{G}
\end{figure} 

The next theorem shows that $\Omega$ is the set of all graphs $G$ of order $n$ satisfying $L_{2,t}(G)= n+2-\Delta(G)$.

\begin{theorem}\label{t10}
If $G$ is a graph of order $n$, then $L_{2,t}(G)\leq n+2-\Delta(G)$.
Furthermore, $L_{2,t}(G)= n+2-\Delta(G)$ if and only if $G\in \Omega$.
\end{theorem}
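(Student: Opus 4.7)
The plan is to obtain the inequality $L_{2,t}(G) \le n + 2 - \Delta(G)$ for free by specializing Lemma~\ref{l9} to $k = 2$; the substance of the theorem is the characterization of the equality case. I would prove the two directions of the equivalence separately, building the decomposition $V(G) = A \cup B$ out of a maximum-degree vertex and an optimal $2$TLP set.

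For the forward implication, I would pick a vertex $w$ of maximum degree $\Delta(G)$ together with an $L_{2,t}(G)$-set $S$, and revisit the chain of inequalities used to prove Lemma~\ref{l9}: equality $L_{2,t}(G) = n + 2 - \Delta(G)$ forces $|\overline{S}| = \Delta(G) - 2$ and $\overline{S} \subseteq N(w)$, so $V(G) = N[w] \cup S$. Setting $A = N[w]$ and $B = S$, a cardinality count yields $|A \cap B| = |A| + |B| - n = 3$, which both establishes (i) and pins down $w \in B$ with exactly two $S$-neighbours. The vertex $w$ is universal in $G[A]$, and the $2$TLP constraint at each $v \in B$ gives $\deg_{G[B]}(v) \le 2$; together these deliver (ii), while (iii) is just the $2$TLP constraint at vertices of $\overline{B}$.

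For the converse I would take $G \in \Omega$ with the prescribed decomposition and show that $B$ itself is a $2$TLP set of size $n + 2 - \Delta(G)$. Verifying the $2$TLP property is routine: at $v \in \overline{B}$ it is exactly (iii), and at $v \in B$ it follows because each component of $G[B]$ is a path or a cycle. The main obstacle is identifying $|B|$ with $n + 2 - \Delta(G)$; since (i) gives $|B| = n + 3 - |A|$, this reduces to $\Delta(G) = |A| - 1$. The inequality $\Delta(G) \ge |A| - 1$ is delivered by the centre of the spanning star of $G[A]$. For the reverse direction I would bound degrees separately: using $|\overline{B}| = |A| - 3$ and (iii), any $v \in B$ satisfies $\deg(v) \le 2 + |\overline{B}| = |A| - 1$, while any $v \in \overline{B}$ satisfies $\deg(v) \le 2 + (|\overline{B}| - 1) = |A| - 2$ since $v$ is not its own neighbour. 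This simultaneously proves $\Delta(G) \le |A| - 1$ and forces the star centre to lie in $B$ (otherwise it would have degree at most $|A| - 2$), which completes the proof.
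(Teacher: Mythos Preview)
Your proposal is correct and follows essentially the same approach as the paper: both directions use $A=N[w]$ and $B=S$ in the forward implication, and in the converse both verify that $B$ is a $2$TLP set and that $\Delta(G)=|A|-1$. Your degree-bound argument for the converse (splitting into $v\in B$ versus $v\in\overline{B}$ and using $|\overline{B}|=|A|-3$) is a bit more streamlined than the paper's case analysis over $A\cap B$, $A\setminus B$, and $B\setminus A$, but the underlying idea is identical.
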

\begin{proof}
Suppose that $S$ is an $L_{2,t}(G)$-set, and $w$ is a vertex of maximum degree in $G$. Notice that each component of $G[S]$ is a path or a cycle, and we have $L_{2,t}(G)=|S|=n-|\overline{S}|\leq n+2-\Delta(G)$ by Lemma \ref{l9}.

If $L_{2,t}(G)=n+2-\Delta(G)$, then $|\overline{S}|= \Delta(G)-2$, $\big(V(G)\backslash N[w]\big)\subseteq S$ and $|N[w]\cap S|=3$.
Based on the above argument, we have $G\in \Omega$ with $N[w]=A$ and $S=B$.

Let now $G\in \Omega$, then it suffices to prove that $L_{2,t}(G)\geq n+2-\Delta(G)$. Assume now that $A\cap B=\{u_1,u_2,u_3\}$ and $|A|=a+1$, where $w$ is a vertex of degree $a$ in $G[A]$. We claim that $\Delta(G)=a$. Each vertex $v \in B$ in $G[B]$ is at most of degree two. So, each of the vertices $u_1, u_2$ and $u_3$ is adjacent to at most two vertices in $B$. On the other hand, each of $u_1, u_2, u_3$ is adjacent to at most $a-2$ vertices in $A\backslash \{u_1, u_2, u_3\}$. Thus, deg$(u_1)\leq a$, deg$(u_2)\leq a$ and deg$(u_3)\leq a$. For each vertex $v\in A\backslash \{u_1, u_2, u_3\}$, $v$ is adjacent to at most $a-3$ vertices in $A\backslash \{u_1, u_2, u_3,v\}$ and to at most two vertices in $B$. So deg$(v)\leq a-1$ for every $v\in A\backslash \{u_1, u_2, u_3\}$. For each vertex $v'\in B\backslash \{u_1, u_2, u_3\}$, $v'$ is adjacent to at most $a-2$ vertices in $A\backslash \{u_1, u_2, u_3\}$ and to at most two vertices in $B$. Thus, deg$(v')\leq a$ for every $v'\in B\backslash \{u_1, u_2, u_3\}$. Thus, $\Delta (G)\leq a$. But deg$(w)\geq a$, which implies that $\Delta (G)=a$. Note that $B$ is a $2$TLP set of $G$ with $|B|=n-|A|+3=n+2-\Delta (G)$. Therefore, we have $L_{2,t}(G)\geq n+2-\Delta(G)$.
\end{proof}

\begin{corollary}\label{c11}
Let $G$ be an $r$-regular graph of order $n$ such that $L_{k,t}(G)=n+k-r$ for $k\leq r-1$. Then, we have $r\geq \frac{n+1}{2}$.
\end{corollary}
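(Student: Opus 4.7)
My plan is a double counting of the edges between an extremal $L_{k,t}$-set and its complement. I would take $S$ to be an $L_{k,t}(G)$-set so that, by the hypothesis, $|S|=n+k-r$ and $|\overline{S}|=r-k\geq 1$ (the last inequality because $k\leq r-1$). Let $m$ denote the number of edges of $G$ with one end in $S$ and the other in $\overline{S}$.

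For the lower count I would fix a vertex $v\in S$: the $k$TLP condition gives $|N(v)\cap S|\leq k$, and regularity gives $|N(v)|=r$, so $|N(v)\cap\overline{S}|\geq r-k$. Summing over $v\in S$ produces
$$m=\sum_{v\in S}|N(v)\cap\overline{S}|\geq |S|(r-k)=(n+k-r)(r-k).$$
For the upper count I would use only the crude degree bound $|N(u)\cap S|\leq |N(u)|=r$ for $u\in\overline{S}$, which after summation yields
$$m=\sum_{u\in\overline{S}}|N(u)\cap S|\leq r|\overline{S}|=r(r-k).$$
Combining the two inequalities and dividing by the positive quantity $r-k$ gives $n+k-r\leq r$, i.e., $2r\geq n+k\geq n+1$ (using $k\geq 1$), which is precisely the claim.

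The step that requires care is the choice of upper bound on $m$. It is tempting to apply the $k$TLP condition also to $u\in\overline{S}$ and write $|N(u)\cap S|\leq k$ instead of $\leq r$, but that sharper estimate collapses the chain to $n+k-r\leq k$, i.e., $r\geq n$, which is impossible for a simple $r$-regular graph. The coarser degree bound is exactly what is needed to extract the meaningful necessary condition $r\geq (n+1)/2$ without forcing a contradiction.
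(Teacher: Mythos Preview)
Your main argument is correct and takes a cleaner route than the paper. The paper fixes a vertex $w$, deduces $\overline{S}\subseteq N(w)$, shows that every $u\in U=V(G)\setminus N(w)$ is adjacent to all of $\overline{S}$, and then counts the degree of each $v_i\in\overline{S}$ to obtain $n-r+1\leq r$. Your global double count of the $S$--$\overline{S}$ edges reaches $n+k-r\leq r$ in two lines without any of that structural analysis.

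Your final paragraph, however, misreads the logic. The sharper estimate $|N(u)\cap S|\leq k$ for $u\in\overline{S}$ is perfectly valid---it is exactly the $k$TLP condition applied at $u$---and the consequence $r\geq n$ is not a defect of the method to be sidestepped but a \emph{stronger} conclusion: combined with $r\leq n-1$ for simple graphs, it shows that no $r$-regular simple graph with $k\leq r-1$ can satisfy $L_{k,t}(G)=n+k-r$, so the corollary holds vacuously. (Equivalently: your own lower count forces $\overline{S}\subseteq N(v)$ for \emph{every} vertex $v$, in particular for $v\in\overline{S}$, contradicting simplicity.) Replacing $k$ by the coarser bound $r$ does not rescue ``meaning''; it merely produces a weaker necessary condition that happens to match the paper's stated inequality. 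Your proof of that stated inequality is valid, but the observation you dismissed actually proves more than the paper claims.
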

\begin{proof}
If $r=n-1$, then $G$ is a complete graph with $L_{k,t}(G)=k$ for $1\leq k\leq n-2$. So, let $r\leq n-2$. Now assume that $w\in V(G)$ and that $S$ is an $L_{k,t}(G)$-set with $|S|= n+k-r$. Since $|N(w)\cap S|\leq k$, it follows that $|N(w)\cap \overline{S}|\geq r-k$ . Obviously, $|\overline S|=n-|S|=r-k$. Thus, there are exactly $r-k$ vertices, namely $v_1, v_2, \cdots, v_{r-k}$, in $N(w)\cap \overline{S}$. Moreover, $\overline{S}=\{v_1, v_2, \cdots, v_{r-k}\}$. Let $U=V(G)\backslash N(w)$, clearly $U\subseteq S$, and $U\neq \emptyset$ since $r\leq n-2$. If $u\in U$, then $|N(u)\cap S|\leq k$. So, any vertex $u\in U$ is adjacent to all vertices in $\overline{S}$, i.e. every vertex $v_i\in \overline{S}$ is adjacent to all $n-r$ vertices in $U$. Note that $v_i$ has at least one neighbor in $N[w]$, and deg$(v_i)=r$. Therefore, $n-r+1\leq r$ and we get $r\geq \frac{n+1}{2}$.
\end{proof}

For any tree $T$ with order $n\geq3$, $\delta'(T)$ denotes the minimum degree of $T$ among all non-leaf vertices.

\begin{theorem}
Let $T$ be a tree of order $n\geq3$ for which $\delta'(T)\geq c$, and let $c\geq 4$ be a positive integer. Then, we have  $L_{2,t}(T)\leq \frac{c-2}{c-1} n-c+4$.
\end{theorem}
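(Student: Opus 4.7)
My plan is to combine a degree-sum bound on the number $n_I$ of non-leaf vertices of $T$ with a double-count of the $2$TLP constraints over $I$, the set of internal vertices, which will yield $L_{2,t}(T)\le 2n_I$ whenever $n_I\ge 2$. Throughout, let $S$ be an $L_{2,t}(T)$-set and let $L$, $I$ denote the leaves and the internals, with $n_L=|L|=n-n_I$.

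First, since each internal vertex of $T$ has degree at least $c$ while each leaf has degree $1$, one has $2(n-1)=\sum_{v\in V(T)}d(v)\ge n_L+c\,n_I$, and substituting $n_L=n-n_I$ rearranges to $n_I\le(n-2)/(c-1)$.

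For the double-count, I split the constraint $|N(v)\cap S|\le 2$ at each $v\in I$ as $|N(v)\cap L\cap S|+|N(v)\cap I\cap S|\le 2$ and sum over $v\in I$. Reversing the order of summation, the leaf-side sum becomes $|L\cap S|$ (each leaf of $S$ has a unique internal neighbour) while the internal-side sum becomes $\sum_{u\in I\cap S}\deg_I(u)$, where $\deg_I$ denotes the degree in the induced subgraph $T[I]$. The key structural point is that $T[I]$ is connected: for any two internals $u,v$, the interior of their unique $u$--$v$ path in $T$ consists of vertices of degree at least two, hence non-leaves. Therefore, when $n_I\ge 2$, every $u\in I$ has $\deg_I(u)\ge 1$, so the internal-side sum is at least $|I\cap S|$. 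This collapses the inequality to $|S|=|L\cap S|+|I\cap S|\le 2n_I\le 2(n-2)/(c-1)$.

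It only remains to verify the algebraic inequality $2(n-2)/(c-1)\le \frac{c-2}{c-1}n-c+4$, which rearranges to $(c-4)(n-c+1)\ge -4$; this is trivial for $c\ge 4$, and the bound $n\ge c+1$ holds automatically since $T$ contains a vertex of degree at least $c$. The remaining case $n_I=1$, in which $T=K_{1,n-1}$ and the above double-count cannot be turned into $|S|\le 2n_I$ because $T[I]$ has no edge, must be handled by hand: a direct computation gives $L_{2,t}(K_{1,n-1})=3$, and then $3\le\frac{c-2}{c-1}n-c+4$ reduces to $(c-2)n\ge(c-1)^2$, which holds for $n\ge c+1$ since $(c-2)(c+1)\ge(c-1)^2$ for $c\ge 3$. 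The main obstacle is really just the structural observation that $T[I]$ is connected and hence has minimum degree at least one when $n_I\ge 2$; once this is noticed, the inequality $|S|\le 2n_I$ falls out and the rest is an elementary arithmetic verification.
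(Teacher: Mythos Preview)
Your argument is correct and proceeds along a genuinely different route from the paper's. The paper argues by induction on $n$: it roots $T$, locates a deepest leaf, strips off the set $L$ of leaves pendant at its parent (at least $c-1$ of them), observes $|L\cap S|\le 2$, and applies the inductive hypothesis to the smaller tree. Your proof is instead a direct double-count over the internal vertex set $I$, combined with the degree-sum estimate $n_I\le (n-2)/(c-1)$ and the structural fact that $T[I]$ is connected (hence $\deg_I(u)\ge 1$ for every internal $u$ once $n_I\ge 2$). This yields the clean intermediate bound $L_{2,t}(T)\le 2n_I\le \frac{2(n-2)}{c-1}$, which you then check is at most the stated bound via $(c-4)(n-c+1)\ge -4$. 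Two remarks worth noting: first, your inequality $L_{2,t}(T)\le \frac{2(n-2)}{c-1}$ is in fact \emph{strictly stronger} than the paper's bound whenever $c\ge 5$ (and matches it asymptotically at $c=4$), so your method actually proves more; second, the induction in the paper avoids the separate $n_I=1$ case and the connectedness-of-$T[I]$ lemma, at the cost of a less transparent final constant. Both approaches are short, but yours isolates the combinatorial content in a single counting step rather than spreading it across an inductive scheme.
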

\begin{proof}
We prove this theorem by induction on the order of tree $T$. Since $\delta'(T)\geq c$, we have $n\geq c+1$. If $n\in \{c+1, c+2, \cdots, 2c-1\}$, then $T\in \{K_{1,c}, K_{1,c+1}, \cdots, K_{1,2c-2}\}$, respectively. Hence, $L_{2,t}(T)=3\leq \frac{c-2}{c-1}n-c+4$.
Assume that for all tree $T'$ of order $n'<n$ with $\delta'(T')\geq c$, we have $L_{2,t}(T')\leq \frac{c-2}{c-1}n'-c+4$. Now let $T$ be a tree of order $n\geq 2c$ such that $\delta'(T)\geq c$ and let  $S$ be an $L_{2,t}(T)$-set. We root $T$ at $r$, and suppose $v'$ is a leaf of $T$ at the furthest distance from $r$, and $v''$ is the parent of $v'$. Assume that $L$ is the set of all leaves in $N(v'')$. Since $v''$ is adjacent to at least $c-1$ leaves, it follows that $|L|\geq c-1$. Suppose that $T''$ be obtained from $T$ by deleting all the vertices of $L$.
By the induction hypothesis, we have $L_{2,t}(T'')\leq \frac{c-2}{c-1}|V(T'')|-c+4\leq \frac{c-2}{c-1}(n-(c-1))-c+4=\frac{c-2}{c-1}n-2c+6$.

On the other hand, $|L\cap S|\leq |N(v'')\cap S|\leq 2$. Therefore, we get $L_{2,t}(T)\leq L_{2,t}(T'')+2\leq \frac{c-2}{c-1}n-2c+8 \leq \frac{c-2}{c-1}n-c+4$.
\end{proof}

\begin{theorem}
If $G$ is a graph, then for any edge $e\in E(G)$,
$$L_{k,t}(G)\leq L_{k,t}(G-e)\leq L_{k,t}(G)+2.$$
Furthermore, these bounds are sharp.
\end{theorem}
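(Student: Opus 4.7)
The plan is to handle the two inequalities and their sharpness separately; everything reduces to tracking how a single edge deletion affects neighborhoods. For the left inequality $L_{k,t}(G)\leq L_{k,t}(G-e)$, I would take an $L_{k,t}(G)$-set $B$ and observe that $N_{G-e}(w)\subseteq N_G(w)$ for every vertex $w$, so $|B\cap N_{G-e}(w)|\leq |B\cap N_G(w)|\leq k$. Thus $B$ remains a $k$TLP set in $G-e$ and the inequality is immediate.

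For the upper bound $L_{k,t}(G-e)\leq L_{k,t}(G)+2$, write $e=uv$ and let $S'$ be an $L_{k,t}(G-e)$-set. I would verify that $S:=S'\setminus\{u,v\}$ is a $k$TLP set in $G$ by a case split on the test vertex $w$. For $w\notin\{u,v\}$ the neighborhood is unchanged by the edge operation and $S\subseteq S'$, so the bound on $|S\cap N_G(w)|$ is inherited from $S'$. For $w\in\{u,v\}$ the edge $e$ introduces exactly one extra neighbor, namely the other endpoint, and this vertex has been deliberately deleted from $S$; consequently $|S\cap N_G(w)|=|S\cap N_{G-e}(w)|\leq k$. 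It follows that $L_{k,t}(G)\geq|S|\geq|S'|-2$, as required.

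For sharpness I would exhibit explicit examples. The lower inequality is tight on $G=K_2$ with $k\geq 1$, where both numbers equal $2$. For the upper inequality, I would use a ``double star'' $G$ with vertex set $\{u,v,x_1,\dots,x_k,y_1,\dots,y_k\}$ and edges $uv$, $ux_i$ ($1\leq i\leq k$), and $vy_j$ ($1\leq j\leq k$). The set of $2k$ leaves is clearly a $k$TLP set, and a short case analysis on which single vertex is omitted (the four types: $u$, $v$, some $x_i$, some $y_j$) shows that any set of size $2k+1$ forces $|S\cap N(u)|=k+1$ or $|S\cap N(v)|=k+1$, giving $L_{k,t}(G)=2k$. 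After deleting $uv$, the graph becomes two disjoint copies of $K_{1,k}$, in which the full vertex set is a $k$TLP set, so $L_{k,t}(G-uv)=2k+2$, producing a gap of exactly $2$.

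The only step with any content is the last case analysis verifying $L_{k,t}(G)=2k$ for the double star; both inequalities themselves are bookkeeping once the effect of edge deletion on each neighborhood is written out, and the choice to remove both endpoints of $e$ (rather than just one) is the crucial idea that simultaneously repairs the two possible violations at $u$ and $v$.
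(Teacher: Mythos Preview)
Your proposal is correct and follows essentially the same line as the paper. The only cosmetic differences are that the paper splits the upper-bound argument into three cases according to which of $u,v$ lie in the $L_{k,t}(G-e)$-set (removing zero, one, or two vertices accordingly), whereas you uniformly remove both; and the paper's sharpness witnesses are a cycle (for the lower bound) and the double star $ST(x,y)$ with $x,y\geq k+1$ (for the upper), while your $K_2$ and $ST(k,k)$ work equally well.
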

\begin{proof}
Any $k$TLP set of $G$ is also a $k$TLP set of $G-e$, so $L_{k,t}(G)\leq L_{k,t}(G-e)$. Moreover, if $C$ is a cycle on $n$ vertices, then $L_{2,t}(C)= L_{2,t}(C-e)$ for every edge $e\in E(C)$.

Suppose now that $B$ is an $L_{k,t}(G-e)$-set and $e=uv$. If $u,v\in B$, then $B-\{u,v\}$ is a $k$TLP set of $G$ ,and hence $L_{k,t}(G)\geq|B|-2$. If $u\in B$ and $v\notin B$, then $B-\{u\}$ is a $k$TLP set of $G$, and $L_{k,t}(G)\geq|B|-1$. If $u,v\notin B$, then $B$ is a $k$TLP set of $G$, and we have $L_{k,t}(G)\geq|B|$. Therefore, $L_{k,t}(G-e)\leq L_{k,t}(G)+2$.

Let $G$ be a double star $ST(x,y)$, which is the graph obtained by joining the centers of two stars $K_{1,x}$ and $K_{1,y}$ with an edge, such that $x,y\geq k+1$. Assume that the center of stars are $u$ and $v$, respectively. Then, $L_{k,t}(G-e)= L_{k,t}(G)+2$ for $e=uv$.
\end{proof}

\begin{proposition}
Let $G$ be a graph without isolated vertices such that $\Delta(G)\geq 2$, then
$$\rho_o(G)+1 \leq L_{2,t}(G) \leq \frac{{\Delta(G)}^2+1}{\delta(G)}\rho_o(G).$$
\end{proposition}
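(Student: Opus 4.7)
The plan is to prove the two inequalities separately, handling the lower bound by an "add-one-vertex" trick and the upper bound via a covering-plus-counting argument using a maximum open packing.

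For the lower bound, I would start from a maximum open packing $P$ of $G$, so $|P|=\rho_o(G)$. Since $|N(v)\cap P|\leq 1\leq 2$ for every vertex $v$, the set $P$ is already a $2$TLP set. The hypothesis $\Delta(G)\geq 2$ then forces $P\neq V(G)$: if $P$ were all of $V(G)$ then $\deg(v)=|N(v)\cap V(G)|=|N(v)\cap P|\leq 1$ for every $v$, contradicting $\Delta(G)\geq 2$. So I can choose any $u\in V(G)\setminus P$, and for each $v$ the count $|N(v)\cap(P\cup\{u\})|\leq |N(v)\cap P|+[u\in N(v)]\leq 1+1=2$. Hence $P\cup\{u\}$ is a $2$TLP set of cardinality $\rho_o(G)+1$, giving $L_{2,t}(G)\geq \rho_o(G)+1$.

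For the upper bound, let $P$ be a maximum open packing and $B$ an $L_{2,t}(G)$-set. My plan is first to cover $V(G)$ by the balls $N^2[p]$ ($p\in P$), and then to relate $|B|$ to $|V(G)|$ via a degree-sum. Maximality of $P$ implies $V(G)\subseteq N^2[P]$: indeed, for any $v\notin P$ the set $P\cup\{v\}$ fails to be an open packing, so some $u$ satisfies $|N(u)\cap(P\cup\{v\})|\geq 2$; since $P$ itself is open, this forces $v\in N(u)$ and $|N(u)\cap P|=1$, placing $v$ within distance $2$ of $P$. Using the standard inequality $|N^2[p]|\leq 1+\Delta(G)+\Delta(G)(\Delta(G)-1)=\Delta(G)^{2}+1$, I obtain
\[
|V(G)|\ \leq\ \sum_{p\in P}|N^2[p]|\ \leq\ \bigl(\Delta(G)^{2}+1\bigr)\,\rho_o(G).
\]

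Finally, I would double-count edges with an endpoint in $B$: since $B$ is a $2$TLP set,
\[
\delta(G)\,|B|\ \leq\ \sum_{b\in B}\deg(b)\ =\ \sum_{v\in V(G)}|N(v)\cap B|,
\]
and combining this estimate with the covering bound from the previous paragraph yields the desired inequality $L_{2,t}(G)\,\delta(G)\leq(\Delta(G)^{2}+1)\,\rho_o(G)$. The main obstacle is calibrating the constant on the right-hand side of the degree-sum step precisely, since the crude bound $\sum_{v}|N(v)\cap B|\leq 2|V(G)|$ (valid because each term is at most $2$) leads to a factor-of-two loss; to match the stated bound exactly, one needs the sharper estimate $\sum_{v}|N(v)\cap B|\leq |V(G)|$, which should come from a refined counting that partitions $V(G)$ according to its distance from $P$ (or according to the value of $|N(v)\cap B|\in\{0,1,2\}$) and exploits the interplay between $P$ being an open packing and $B$ being a $2$TLP set.
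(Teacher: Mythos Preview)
Your lower-bound argument is complete and correct; the paper merely cites \cite{hms} for that inequality, so you have in fact supplied the details the paper omits.

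For the upper bound your overall strategy matches the paper's: combine a bound relating $n=|V(G)|$ to $\rho_o(G)$ with the inequality $L_{2,t}(G)\leq 2n/\delta(G)$ (which the paper quotes from \cite{hms}, and which is exactly your degree-sum step together with the ``crude'' estimate $\sum_v|N(v)\cap B|\leq 2n$). Your covering argument correctly yields $n\leq(\Delta(G)^2+1)\rho_o(G)$, and together these give the stated bound with an extra factor of~$2$.

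The genuine gap is in your proposed remedy. The sharper estimate $\sum_v|N(v)\cap B|\leq|V(G)|$ is simply false: for $G=C_n$ the set $B=V(G)$ is a $2$TLP set and $\sum_v|N(v)\cap B|=2n$. More structurally, the identity $\sum_v|N(v)\cap B|=\sum_{b\in B}\deg(b)$ shows that this sum depends only on the degrees of the vertices of $B$ and has no interaction whatsoever with the open packing $P$; no partition of $V(G)$ by distance to $P$ or by the value of $|N(v)\cap B|$ can push it below $2n$ in general. The paper instead absorbs the missing factor of $2$ on the \emph{other} side, asserting ``by the greedy algorithm'' that $\rho_o(G)\geq 2n/(\Delta(G)^2+1)$, i.e.\ that your covering bound can be strengthened to $n\leq\tfrac12(\Delta(G)^2+1)\rho_o(G)$. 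You should be aware, however, that this strengthened greedy bound---and with it the proposition's upper bound as stated---already fails for $G=K_3$, where $\rho_o=1$, $L_{2,t}=3$, $\delta=\Delta=2$, and the claimed inequality would read $3\leq 5/2$. So the factor-of-two obstacle you detected is real, and your weaker conclusion $L_{2,t}(G)\leq \dfrac{2(\Delta(G)^2+1)}{\delta(G)}\,\rho_o(G)$ is what this method actually proves.
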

\begin{proof}
Mojdeh et al. \cite{hms} showed that the lower bound is true for $\Delta(G)\geq 2$. So, we just verify the upper bound. Let $v\in V(G)$ be an arbitrary vertex, then the set of all vertices at  distance at most two from $v$ has at most ${\Delta(G)}^2+1$ vertices. Thus, $\rho_o(G)\geq \frac{2n}{{\Delta(G)}^2+1}$, by the greedy algorithm.
Moreover, $L_{k,t}(G)\leq \frac{kn}{\delta(G)}$ \cite{hms}, and we get
$$\rho_o(G)\geq\frac{2n}{{\Delta(G)}^2+1}=\frac{2n\delta(G)}{{(\Delta(G)}^2+1)\delta(G)}\geq L_{2,t}(G)\frac{\delta(G)}{{\Delta(G)}^2+1}.$$
Therefore, we infer that
$$L_{2,t}(G)\leq \frac{{\Delta(G)}^2+1}{\delta(G)}\rho_o(G).$$
\end{proof}

We can improve the above bounds for trees as follows.

\begin{theorem}
If $T$ is a given tree with $\Delta(T)\geq2$, then
$$\rho_o(T)+1 \leq L_{2,t}(T) \leq 2\rho_o(T).$$
Moreover, the following statements hold:
\begin{itemize}
\item[(i)]
$\rho_o(T)+1 = L_{2,t}(T)$ if and only if T is a star with at least three vertices,
\item[(ii)]
$L_{2,t}(T)=2\rho_o(T)$ if and only if for every $L_{2,t}(T)$-set $S$ and every $\gamma_t(T)$-set $D$, we have $|N(s)\cap D|=1$ and $|N(d)\cap S|=2$ for any $s\in S$ and any $d\in D$.
\end{itemize}
\end{theorem}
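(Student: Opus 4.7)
The lower bound $\rho_o(T)+1\le L_{2,t}(T)$ is already established by the preceding proposition, so the remaining tasks are the upper bound and items (i) and (ii). My strategy rests on the classical tree identity $\gamma_t(T)=\rho_o(T)$, which I will cite as a known fact on open packings in trees. For the upper bound I would prove the sharper inequality $L_{2,t}(G)\le 2\gamma_t(G)$, valid for any graph $G$ with a total dominating set, by a double count: given an $L_{2,t}(G)$-set $S$ and a $\gamma_t(G)$-set $D$,
$$|S|\;\le\;\sum_{s\in S}|N(s)\cap D|\;=\;\sum_{d\in D}|N(d)\cap S|\;\le\;2|D|,$$
since $D$ is totally dominating (every $s\in S$ has at least one neighbour in $D$) and $S$ is a $2$TLP (every $d\in D$ has at most two neighbours in $S$). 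Specialising to trees and invoking $\gamma_t(T)=\rho_o(T)$ yields $L_{2,t}(T)\le 2\rho_o(T)$.

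For (i), the ``if'' direction is immediate: in $K_{1,r}$ with $r\ge 2$ we have $\rho_o(K_{1,r})=2$ (the centre together with any leaf) and $L_{2,t}(K_{1,r})=3$ (the centre together with any two leaves). For the converse I would assume $T$ is not a star, so that $\mathrm{diam}(T)\ge 3$, pick a diametral path $v_0v_1\cdots v_k$ and an arbitrary maximum open packing $B$, and show that $B$ can be enlarged by two additional vertices (chosen from among $v_0,v_k$ and leaves neighbouring $v_1$ or $v_{k-1}$) into a $2$TLP set of size $|B|+2$, contradicting $L_{2,t}(T)=\rho_o(T)+1$. The verification is a short case analysis depending on which of the candidate vertices already belong to $B$.

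For (ii), using $\gamma_t(T)=\rho_o(T)$ the chain above reads $L_{2,t}(T)\le 2\gamma_t(T)=2\rho_o(T)$, so $L_{2,t}(T)=2\rho_o(T)$ is equivalent to equality throughout the double count for every pair of a maximum $L_{2,t}(T)$-set $S$ and a minimum TDS $D$. Equality in $\sum_{s\in S}|N(s)\cap D|\ge|S|$ forces $|N(s)\cap D|=1$ for each $s\in S$, and equality in $\sum_{d\in D}|N(d)\cap S|\le 2|D|$ forces $|N(d)\cap S|=2$ for each $d\in D$; these are precisely the stated conditions. Conversely, these conditions imply $|S|=2|D|$ by summation, hence $L_{2,t}(T)=2\gamma_t(T)=2\rho_o(T)$.

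The main obstacle I expect is the converse of (i), where the augmentation argument must account for various configurations of leaves near the endpoints of the diametral path and therefore requires a careful case split. The upper-bound chain and the equivalence in (ii) follow routinely from the double-counting identity once the tree identity $\gamma_t(T)=\rho_o(T)$ is in hand.
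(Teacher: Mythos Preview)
Your proposal is correct and follows essentially the same route as the paper: the upper bound and item~(ii) are obtained via the double count $|S|\le\sum_{s}|N(s)\cap D|=\sum_{d}|N(d)\cap S|\le 2|D|$ combined with the tree identity $\gamma_t(T)=\rho_o(T)$, and item~(i) is handled by augmenting a maximum open packing along a short path once $\mathrm{diam}(T)\ge 3$. The only cosmetic difference is that the paper works with an arbitrary path $v_1v_2v_3v_4$ of length~$3$ (rather than a full diametral path) and splits into cases according to $|S_1\cap\{v_1,v_2,v_3,v_4\}|\in\{0,1,2\}$; this keeps the case analysis entirely inside the four-vertex path and avoids any reference to extra leaves near the endpoints.
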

\begin{proof}
The lower bound is true for $\Delta(T)\geq2$. We know that $L_{k,t}(T)\leq k\gamma_t(T)$ (\cite{hms}). On the other hand, we have $\rho_o(T)=\gamma_t(T)$ for every tree $T$ with at least two vertices (\cite{r}). As a consequence, we have $L_{2,t}(T) \leq2\gamma_t(T)= 2\rho_o(T)$.

Let $T$ be the star $K_{1,x}$ with $x\geq2$. Then, $\rho_o(T)=2$ and $L_{2,t}(T)=3$. Therefore, $\rho_o(T)+1 = L_{2,t}(T)$.

It remains for us to prove the converse. Assume now that $T$ is a tree with $\rho_o(T)+1 = L_{2,t}(T)$. We claim that $diam(T)\leq2$. Suppose to the contrary that there exist two vertices $v_1, v_4 \in V(T)$ such that $d(v_1,v_4)=3$ and let $P=v_1v_2v_3v_4$ be the path between them. Assume that $S_1$ is a $\rho_o(T)$-set, then $|V(P)\cap S_1|\leq2$. We now consider three cases as follows.

\textit{Case 1.} Let $V(P)\cap S_1=\emptyset$. Set $S_2=S_1\cup \{v_1,v_2\}$, we show that $S_2$ is a $2$TLP set of $T$. Since $|N(v_i)\cap S_1|\leq1$ and $|N(v_i)\cap \{v_1,v_2\}|\leq1$ for $1\leq i\leq 4$, it follows that $|N(v_i)\cap S_2|\leq2$ for every $v_i\in V(P)$. Let now $w$ be a vertex outside of $P$, so $|N(w)\cap\{v_1,v_2\}|\leq1$ because $T$ has no cycle. Thus, $|N(w)\cap S_2|\leq2$ for every vertex $w$ outside $P$. Therefore, we conclude that $S_2$ is a $2$TLP set of $T$.

\textit{Case 2.} Assume $V(P)\cap S_1=\{v_i\}$ for $1\leq i\leq 4$. First, let $i=1$ or $4$, by using similar techniques as in the previous case, $S_1\cup \{v_2,v_3\}$ is a $2$TLP set of $T$. If $i=2$ or $3$, then $S_1\cup \{v_1,v_4\}$ is a $2$TLP set of $T$.

\textit{Case 3.} Suppose $V(P)\cap S_1=\{v_i,v_j\}$ for some $1\leq i\neq j\leq4$. If $(i,j)\in \{(1,2), (1,4), (2,3), (3,4)\}$, then $S_1\cup \{v_3,v_4\}$,  $S_1\cup \{v_2,v_3\}$, $S_1\cup \{v_1,v_4\}$ and $S_1\cup \{v_1,v_2\}$ are $2$TLP sets of $T$, respectively.

In each case, we observe that $L_{2,t}(T)\geq \rho_o(T)+2$, which contradicts the assumption $\rho_o(T)+1 = L_{2,t}(T)$. Therefore, we deduce that $diam (T)\leq2$, and $T$ is a star with at least three vertices.

Let now $T$ be a tree with $\Delta(T)\geq2$. As mentioned earlier, we know that $L_{2,t}(T) =2 \rho_o(T)$ if and only if $L_{2,t}(T) =2\gamma_t(T)$. Let $S$ be an $L_{2,t}(T)$-set, and $D$ be a $\gamma_t(T)$-set. We now restate the proof of Theorem $7$ in \cite{hms}. We set $U=\{(s,d)\in V(G)\times V(G)|s\in S, d\in D~ \text{and}~ s\in N(d)\}$, and count the members of $U$ in two ways.
Since $|N(s)\cap D|\geq1$ for any $s\in S$, it follows that there is at least one vertex $d\in D$ such that $s\in N(d)$. Thus, $|S|\leq |U|$.
On the other hand, for any $d\in D$ we have $|N(d)\cap S|\leq2$. Hence, there exists at most two vertices $s_1, s_2\in S$ such that $s_1, s_2\in N(d)$. So, we get $|U|\leq 2|D|$, and $|S|\leq 2|D|$.
Therefore, $L_{2,t}(T) =2\gamma_t(T)$ if and only if the following statements hold:
\begin{itemize}
\item[(1)]
for any $s\in S$, we have $|N(s)\cap D|=1$,
\item[(ii)]
for any $d\in D$, we have $|N(d)\cap S|=2$.
\end{itemize}
\end{proof}

If $diam(G)=1$, then $G$ is a complete graph, and we know that $L_{2,t}(K_n)=2$. What can be said about the $2$TLP number of graphs with diameter $2$? The following theorem is the answer to this question.

\begin{theorem}
If $c\geq3$ is a positive integer, then there exists a graph $G$ with $diam(G)=2$ such that $L_{2,t}(G)=c$.
\end{theorem}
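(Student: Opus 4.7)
The plan is to exhibit, for every $c \geq 3$, a diameter-$2$ graph $G_c$ with $L_{2,t}(G_c) = c$. For the three smallest cases standard graphs work immediately: the star $K_{1,3}$ has diameter $2$, and the center together with any two leaves forms a maximum $2$TLP set, so $L_{2,t}(K_{1,3}) = 3$; the $4$-cycle $C_4 = K_{2,2}$ has its entire vertex set as a $2$TLP set, so $L_{2,t}(C_4) = 4$; and the $5$-cycle $C_5$ has diameter $2$ and $L_{2,t}(C_5) = 5$ by the same reasoning.

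For the remaining cases $c \geq 6$ I would try the following parametric construction. Let $G_c$ have vertex set $V(G_c) = \{v_1, \ldots, v_c\} \cup \{u_{ij} : 1 \leq i < j \leq c\}$, put an edge between $v_i$ and each $u_{ij}$ (for $i \in \{i,j\}$), and set $u_{ij} u_{kl} \in E(G_c)$ whenever $\{i,j\} \cap \{k,l\} \neq \emptyset$. Diameter $2$ is then a three-case check: $v_i$ and $v_j$ share the common neighbor $u_{ij}$; a vertex $v_i$ and a non-incident $u_{kl}$ share the common neighbor $u_{ik}$; and two disjoint $u$-vertices $u_{ij}, u_{kl}$ share $u_{ik}$, which exists once $c \geq 4$. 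Moreover, $B_0 = \{v_1, \ldots, v_c\}$ is a $2$TLP set of size $c$, since $N(v_i) \cap B_0 = \emptyset$ and $N(u_{ij}) \cap B_0 = \{v_i, v_j\}$, giving $L_{2,t}(G_c) \geq c$.

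The main obstacle is the matching upper bound $L_{2,t}(G_c) \leq c$. For any $2$TLP set $B$, set $a = |B \cap \{v_1, \ldots, v_c\}|$ and let $H$ be the spanning subgraph of $K_c$ with edge set $\{\{i,j\} : u_{ij} \in B\}$, so that $|E(H)| = |B \cap \{u_{ij}\}|$. The $v_i$-constraints force $\Delta(H) \leq 2$, while every non-edge $u_{ij} \notin B$ contributes the inequality $[v_i \in B] + [v_j \in B] + \deg_H(i) + \deg_H(j) \leq 2$. I would then split into two cases. If no vertex of $B \cap \{v_1,\ldots,v_c\}$ is incident to an edge of $H$, then the edges of $H$ lie among the $c-a$ indices outside $B \cap \{v_1,\ldots,v_c\}$, so $|E(H)| \leq c - a$ (the maximum on $c-a$ vertices with $\Delta \leq 2$) and hence $|B| \leq c$. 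If instead some $v_i \in B$ has $\deg_H(i) \geq 1$, the constraint on the corresponding $u_{ij} \in B$ combined with the non-edge inequalities pin the structure down very tightly, in fact forcing $a \leq 2$ and only a tiny number of edges in $H$, after which a direct count gives $|B| < c$. The technical heart of the argument is running this second case cleanly, in particular ruling out mixed configurations where $B$ contains some $v_i$'s incident to edges of $H$ together with additional $u$-edges on the remaining indices.
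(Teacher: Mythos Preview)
Your argument is correct, but it takes a genuinely different route from the paper's.

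\textbf{Construction.} The paper gives a single graph for every $c\ge3$: $V_1=\{v_1,\dots,v_c\}$ independent, $V_2$ a \emph{complete} graph $K_{\binom{c}{2}}$, and each pair in $V_1$ joined to a unique vertex of $V_2$. Your $G_c$ (for $c\ge6$) keeps the same bipartite incidences but replaces the clique on the $u$-vertices by the triangular graph $L(K_c)$, where $u_{ij}\sim u_{kl}$ only when $\{i,j\}\cap\{k,l\}\neq\emptyset$. Both have diameter $2$ and contain $\{v_1,\dots,v_c\}$ as a $2$TLP set.

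\textbf{Upper bound.} The paper simply invokes its earlier structural result: $L_{2,t}(G)\le n+2-\Delta(G)$ with equality exactly on the family $\Omega$; here $n=c+\binom{c}{2}$, $\Delta=\binom{c}{2}+1$, so $L_{2,t}\le c+1$, and $G\notin\Omega$ drops this to $c$. Your bound is a self-contained combinatorial count. Case~1 is immediate. Case~2 is in fact cleaner than you anticipate: if $v_i\in B$ with $\deg_H(i)\ge1$, then for every $l\notin N_H[i]$ the non-edge inequality on $\{i,l\}$ reads $1+[v_l\in B]+\deg_H(i)+\deg_H(l)\le2$, forcing $v_l\notin B$ and $\deg_H(l)=0$. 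Thus $E(H)$ and $B\cap\{v_k\}$ are confined to $N_H[i]$, a set of size at most~$3$; the edge constraints on that small set then give $|B|\le3$. No ``mixed configurations'' survive.

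What each approach buys: the paper's proof is a two-line application of machinery already in the paper; yours is independent of that machinery and in fact works uniformly for all $c\ge3$ (so your separate handling of $c\in\{3,4,5\}$ is unnecessary, though harmless).
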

\begin{proof}
In what follows, we construct a graph $G$ with diameter $2$ for which $L_{2,t}(G)=c$. Suppose that $V_1=\{v_1, v_2, \cdots, v_c\}$ and $V_2=\{u_1, u_2, \cdots, u_{\frac{c(c-1)}{2}}\}$ with $V_1\cap V_2= \emptyset$. Let $G$ be a graph with vertex set $V(G)=V_1\cup V _2$ such that $G[V_1]=cK_1$, $G[V_2]=K_{\frac{c(c-1)}{2}}$ and each pair of distinct vertices in $V_1$ has a unique common neighbor in $V_2$. Obviously, $diam(G)=2$. It remains to show that $L_{2,t}(G)=c$. We know $|V(G)|=c+\frac{c(c-1)}{2}$ and $\Delta(G)=\frac{c(c-1)}{2}+1$. Hence, by Theorem \ref{t10}, $L_{2,t}(G)\leq |V(G)|+2-\Delta(G)=c+1$. But $G\notin \Omega$, so $L_{2,t}(G)\leq c$.
On the other hand, $V_1$ is a $2$TLP set of $G$. Thus, $L_{2,t}(G)= c$.
\end{proof}

\begin{theorem}
Assume that $a\geq3$ and $b$ are two integers with $a+1\leq b \leq 2a$. Then, there exists a tree $T$ for which $\rho_o(T)=a$ and $L_{2,t}(T)=b$.
\end{theorem}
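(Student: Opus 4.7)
The plan is to exhibit, for each pair $(a,b)$ with $a\geq 3$ and $a+1\leq b\leq 2a$, an explicit tree $T_{a,b}$ satisfying $\rho_o(T_{a,b})=a$ and $L_{2,t}(T_{a,b})=b$. Throughout I use the identity $\rho_o(T)=\gamma_t(T)$ valid for every tree of order at least $2$, cited in the previous theorem as \cite{r}, so I only have to arrange $\gamma_t(T_{a,b})=a$ and then compute $L_{2,t}$.

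Two anchor constructions handle the extreme values of $b$. For $b=2a$, take the caterpillar whose spine is the path $s_1 s_2\cdots s_a$ with two pendant leaves attached at every $s_i$. Every spine vertex is forced into any total dominating set by its pair of leaves, and the spine itself is a TDS, so $\gamma_t=a$; the set of all $2a$ leaves is a 2TLP, and combined with the upper bound $L_{2,t}\leq 2\rho_o$ from the previous theorem this gives $L_{2,t}=2a$. For $b=a+2$, take the spider $S_{a-1}$ obtained by joining $a-1$ paths of length two $c-x_i-y_i$ at a common centre $c$. Each $x_i$ is forced into any TDS by its leaf $y_i$, and $\{c,x_1,\ldots,x_{a-1}\}$ is a TDS of size $a$, so $\gamma_t=a$. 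The set $\{c,y_1,\ldots,y_{a-1},x_1,x_2\}$ is a 2TLP of size $a+2$, and the local constraint $|N(c)\cap S|\leq 2$ at the high-degree centre $c$ caps any 2TLP containing $\{y_1,\ldots,y_{a-1}\}$ at $a+2$, pinning $L_{2,t}$ at exactly $a+2$.

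For intermediate $b$ I interpolate between the two anchor templates by local modifications that adjust $L_{2,t}$ by one while preserving $\gamma_t=a$: near $b=2a$, remove a pendant leaf from an extreme spine vertex (once for $b=2a-1$, from both endpoints for $b=2a-2$, etc., placing the ``$1$-leaf'' positions so that no spine vertex can be added to the all-leaves 2TLP without violating the bound at a neighbour); near $b=a+2$, attach a small pendant pair at one of the $x_i$'s of the spider, tracking the changes in $\gamma_t$ and $L_{2,t}$; and in the middle of the range, glue a short caterpillar to a small spider through a single edge. The main obstacle is the matching upper bound $L_{2,t}\leq b$, because adding a single vertex $v$ to the natural ``all leaves'' 2TLP raises $|N(u)\cap S|$ by one for every neighbour $u$ of $v$, and the construction must be arranged so that at least one such neighbour is already saturated at the budget of two; this is the point where the careful placement of the $1$-leaf positions (in the caterpillar template) and the high degree of $c$ (in the spider template) do the real work. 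Finally, part~(i) of the previous theorem shows that $L_{2,t}=\rho_o+1$ occurs only for stars, which have $\rho_o=2$; hence under the hypothesis $a\geq 3$ the nominal boundary $b=a+1$ is vacuous and the effective range $a+2\leq b\leq 2a$ is covered by the constructions above.
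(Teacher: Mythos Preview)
Your anchor constructions are correct: the caterpillar with spine $s_1\cdots s_a$ and two pendants at each spine vertex gives $(\rho_o,L_{2,t})=(a,2a)$, and the spider with $a-1$ legs of length two gives $(a,a+2)$ --- for the latter, Lemma~\ref{l9} already yields $L_{2,t}\leq(2a-1)+2-(a-1)=a+2$, so your informal saturation argument is unnecessary. You are also right to flag $b=a+1$: part~(i) of the preceding theorem rules it out for $a\geq3$, and indeed the paper's own $x=1$ tree lies in $\Omega$ (take $A=N[r]$ and $B=\{r,v_1,v_a,u_1,\dots,u_{a-1}\}$) and has $L_{2,t}=a+2$, not $a+1$.

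The genuine gap is the intermediate range $a+3\leq b\leq 2a-1$. What you wrote there is a programme, not a proof: ``remove a pendant leaf from an extreme spine vertex\dots etc.'', ``attach a small pendant pair'', ``glue a short caterpillar to a small spider'' are not constructions, and you concede that the matching upper bound $L_{2,t}\leq b$ is ``the main obstacle'' without discharging it for a single intermediate tree. Removing one leaf from an endpoint of the caterpillar does drop $L_{2,t}$ to $2a-1$, but your ``etc.''\ runs out of endpoints after two steps, and for the glued spider/caterpillar hybrids you give no upper-bound argument at all. The paper handles every $b=a+x$ with $2\leq x\leq a-1$ by a single family: take the star $K_{1,a}$ with centre $r$ and leaves $v_1,\dots,v_a$, attach two leaves $u_i,u'_i$ to each of $v_1,\dots,v_{x-1}$ and one leaf $u_i$ to each of $v_x,\dots,v_{a-1}$ (leaving $v_a$ a leaf). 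Then $\{r,v_1,\dots,v_{a-1}\}$ is a $\gamma_t$-set of size $a$, the set $\{u_1,\dots,u_{a-1},u'_1,\dots,u'_{x-1},v_1,v_a\}$ is a 2TLP of size $a+x$, and the upper bound comes from Theorem~\ref{t10} after checking $T\notin\Omega$ (for $x\geq2$ the degree-$3$ vertex $v_1$ forces this, since all three of its neighbours $r,u_1,u'_1$ would have to lie in any 2TLP of size $|V|+2-\Delta$). A single family with a uniform upper-bound mechanism is exactly what your interpolation scheme is missing.
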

\begin{proof}
Let $a\geq3$ and $b$ be two integers such that $a+1\leq b \leq 2a$, and $b=a+x$ with $1\leq x\leq a$. In what follows, we construct a tree $T$ with $\rho_o(T)=a$ and $L_{2,t}(T)=a+x$ for $a\geq3$ and $1\leq x\leq a$.
We distinguish two cases based on the value of $x$.

\textit{Case 1.}
First, let $x=a$. Assume $P=v_1v_2\cdots v_a$ is a path. We add two leaves $u_{i_1}$ and $u_{i_2}$ to each $v_i$, and obtain tree $T$. Let $S_1$ be a $\rho_o(T)$-set. If $|S_1|\geq a+1$, by the Pigeonhole principle, there is at least one vertex $v_i$ such that $|N(v_i)\cap S_1|\geq2$, which is impossible. Hence, $\rho_o(T)\leq a$. On the other hand, $\{u_{1_1}, u_{2_1}, \cdots, u_{a_1}\}$ is a $1$TLP set of $T$, so $\rho_o(T)= a$. 

Let $S_2$ be an $L_{2,t}(T)$-set. Similarly, if $L_{2,t}(T)\geq 2a+1$, there exists at least one vertex $v_i$ such that $|N(v_i)\cap S_2|\geq3$, a contradiction. Thus, $L_{2,t}(T)\leq 2a$. Moreover, $\{u_{1_1}, u_{1_2}, u_{2_1}, u_{2_2} \cdots, u_{a_1}, u_{a_2}\}$ is a $2$TLP set of $T$, hence $L_{2,t}(T)= 2a=b$.

\textit{Case 2.} Suppose now that $1\leq x\leq a-1$. Consider the star $T'=K_{1,a}$ with $V(T')= \{r,v_1,v_2, \cdots, v_a\}$ and deg$(r)=a$. Let $T$ be the tree obtained from $T'$ by adding two leaves $u_i$ and $u'_i$ to each $v_i$ for $1\leq i\leq x-1$ and one leaf $u_i$ to each $v_i$ for $x\leq i\leq a-1$. We show that $\rho_o(T)=a$ and $L_{2,t}(T)=b$.
Since $T\notin \Omega$, it follows that $L_{2,t}(T)<|V(T)|+2-\Delta(T)$ by Theorem \ref{t10}. Notice that $|V(T)|=2a+x-1$ and $\Delta(T)=a$, thus $L_{2,t}(T)\leq a+x$. On the other hand, $\{u_1, u_2, \cdots, u_{a-1}, u'_1, u'_2, \cdots, u'_{x-1}, v_1, v_a\}$ is a $2$TLP set of $T$, so $L_{2,t}(T) = a+x =b$.

Since $T$ is a tree with at least two vertices, $\rho_o(T)=\gamma_t(T)$ \cite{hms}. Moreover, $\{r, v_1, v_2, \cdots, v_{a-1}\}$ is a TD set of $T$, and hence $\gamma_t(T)\leq a$. Thus, $\rho_o(T)\leq a$. It is readily verified that $\{u_1, u_2, \cdots, u_{a-1}, v_a\}$ is a $1$TLP set of $T$. Therefore, $\rho_o(T) =a$.
\end{proof}

\begin{theorem}
Let $G$ have a unique  $L_{2,t}(G)$-set $B$. Then every leaf of $G$ belongs to $B$.
\end{theorem}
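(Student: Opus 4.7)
The plan is a swap argument by contradiction: assume some leaf $v$ of $G$ lies outside $B$, let $u$ be its unique neighbor, and produce a second $L_{2,t}(G)$-set distinct from $B$.

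First I would exploit the fact that, since $v$ is a leaf, the only vertex whose open neighborhood contains $v$ is $u$. Consequently, enlarging $B$ by $v$ changes $|N(w)\cap B|$ only when $w=u$, where the count goes up by exactly one (using $v\notin B$). Hence if $|N(u)\cap B|\leq 1$, then $B\cup\{v\}$ remains a $2$TLP set of strictly larger size than $B$, contradicting the maximality of $|B|=L_{2,t}(G)$. Therefore $|N(u)\cap B|=2$; write $N(u)\cap B=\{x,y\}$, noting $x,y\neq v$ because $v\notin B$.

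Next I would perform the swap $B'=(B\setminus\{x\})\cup\{v\}$, so that $|B'|=|B|$, and verify $B'$ is a $2$TLP set by inspecting $|N(w)\cap B'|$ at each $w\in V(G)$. For $w\neq u$ we have $v\notin N(w)$, and so $N(w)\cap B'\subseteq N(w)\cap B$, which forces $|N(w)\cap B'|\leq|N(w)\cap B|\leq 2$. For $w=u$, the swap replaces $x$ in $N(u)\cap B$ by $v$, giving $N(u)\cap B'=\{y,v\}$ of size $2$. Thus $B'$ is an $L_{2,t}(G)$-set, and since $v\in B'\setminus B$, we have $B'\neq B$, contradicting uniqueness.

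No serious obstacle arises. The only point to watch is that the argument must work regardless of whether $u\in B$; it does, because $u\notin N(u)$ means the membership of $u$ in $B$ never enters the intersection counts at $w=u$, and the swap leaves $u$ in or out of $B$ exactly as it was. Both the ``enlargement'' step and the ``replacement'' step are therefore uniform and require no case split.
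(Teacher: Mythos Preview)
Your proof is correct and follows the same enlargement-then-swap strategy as the paper's own proof. The paper additionally splits into cases according to whether the support vertex lies in $B$, but as you correctly note, that distinction never affects the open-neighborhood counts, so your version is in fact a slightly cleaner rendering of the same argument.
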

\begin{proof}
Let $B$ be a unique  $L_{2,t}(G)$-set, and let there exist a leaf $l\notin B$ with the support vertex $v$. If $v\in B$ and $|N(v)\cap B |\leq1$, then $B'=B\cup \{l\}$ is a $2$TLP set which is greater than $B$, a contradiction. So if $v\in B$, then $|N(v)\cap B |=2$. Let $u\in N(v)\cap B$. We can easily see that   $B''=(B\backslash \{u\})\cup\{l\}$ is an $L_{2,t}(G)$-set, which is impossible because $B$ is unique. Hence $v\notin B$.

If some neighbor of $v$, say $u'$, belongs to $B$, then $B''=(B\backslash \{u'\})\cup \{l\}$ is an $L_{2,t}(G)$-set. This contradicts the assumption. Therefore, we deduce that $N[v]\cap B=\emptyset$. So $B\cup \{l\}$ is a $2$TLP set, which is a contradiction with the maximality of $B$. Hence $l\in B$.
\end{proof}

%%%%%%%%%%%%%%%%%%%%%%%%%%%%%%%%%%%%%%%%%%%%%%%

\section{On $2$-(total) limited packing number of some graph products}

\begin{theorem}\label{Cart1}
For any graphs $G$ and $H$, $L_{2,t}(G\square H)\geq \max\{L_{2,t}(G)\rho(H),\rho(G)L_{2,t}(H)\}$. Moreover, this bound is sharp.
\end{theorem}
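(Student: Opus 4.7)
The plan is to construct an explicit $2$TLP set of $G\square H$ of size $L_{2,t}(G)\,\rho(H)$; the other term in the maximum then follows by symmetry. Choose an $L_{2,t}(G)$-set $A$ and a $\rho(H)$-set $P$, and put $S:=A\times P$. Using the standard neighborhood decomposition
\begin{equation*}
N_{G\square H}\big((g,h)\big)=\big(\{g\}\times N_H(h)\big)\,\cup\,\big(N_G(g)\times\{h\}\big),
\end{equation*}
we obtain
\begin{equation*}
S\cap N_{G\square H}\big((g,h)\big)=\big((\{g\}\cap A)\times(N_H(h)\cap P)\big)\,\cup\,\big((N_G(g)\cap A)\times(\{h\}\cap P)\big),
\end{equation*}
and the task reduces to bounding this cardinality by $2$ for every $(g,h)\in V(G\square H)$.

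The verification splits on whether $h\in P$. If $h\in P$, the packing property of $P$ forces $|N_H[h]\cap P|\leq 1$ and hence $|N_H(h)\cap P|=0$, so the first piece is empty; the second piece then has size at most $|N_G(g)\cap A|\leq 2$, using that $A$ is a $2$TLP set of $G$. If $h\notin P$, the second piece vanishes (the factor $\{h\}\cap P$ is empty), and the first piece has size at most $|N_H(h)\cap P|\leq|N_H[h]\cap P|\leq 1$ by the packing property again. Either way the intersection has at most two elements, so $S$ is a $2$TLP set of $G\square H$ of the claimed size. Swapping the roles of $G$ and $H$ yields the symmetric bound.

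For sharpness, the cleanest witness is the trivial one $H=K_1$: then $G\square H\cong G$, while $\rho(K_1)=L_{2,t}(K_1)=1$, so both sides equal $L_{2,t}(G)$. For a less trivial example I would pair $G=K_n$ (for which $L_{2,t}(G)=2$ and $\rho(G)=1$) with a small graph $H$ and compare the constructed set against the upper bounds given by Lemma \ref{l9} or Theorem \ref{t10}; any example where that upper bound coincides with $\max\{L_{2,t}(G)\rho(H),\rho(G)L_{2,t}(H)\}$ forces equality. The main (minor) obstacle is the bookkeeping in the case split and the selection of a clean non-trivial sharpness example; the construction itself is dictated by taking the natural product of an extremal $2$TLP set in one factor with an extremal packing in the other.
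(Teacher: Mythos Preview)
Your proof is correct and follows the same construction as the paper: form the product of an $L_{2,t}(G)$-set with a $\rho(H)$-set and verify it is a $2$TLP set of $G\square H$; your direct case split on whether $h\in P$ is equivalent to the paper's contradiction argument based on whether the three putative neighbors share the same $H$-coordinate. The only notable difference is your sharpness witness: taking $H=K_1$ is formally valid but degenerate (the product collapses to $G$), whereas the paper exhibits a non-trivial infinite family, namely $G=G'\odot K_1$ for any connected graph $G'$ on $n$ vertices together with $H=K_r$ for $r\geq3$, and shows $L_{2,t}(G\square K_r)=2n=\rho(G)\,L_{2,t}(K_r)$.
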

\begin{proof}
Let $P_{G}$ and $P_{H}$ be an $L_{2,t}(G)$-set and a $\rho(H)$-set, respectively. Set $P=P_{G}\times P_{H}$, and suppose to the contrary that $P$ is not a $2$TLP set of $G\square H$. Therefore, there exists a vertex $(x,y)\in V(G)\times V(H)$ adjacent to three distinct vertices $(g_{1},h_{1}),(g_{2},h_{2}),(g_{3},h_{3})\in P$. We distinguish the following cases.

\textit{Case 1.} $h_{1}=h_{2}=h_{3}$. If $y$ is adjacent to $h_1$, then $x=g_{1}=g_{2}=g_{3}$, which is impossible. So, $y=h_{1}=h_{2}=h_{3}$. In such a situation, $x$ is adjacent to $g_{1},g_{2},g_{3}\in P_{G}$, which contradicts the fact that $P_{G}$ is a $2$TLP set in $G$.

\textit{Case 2.} At least two vertices from $\{h_{1},h_{2},h_{3}\}$, say $h_{1}$ and $h_{2}$, are distinct. By the adjacency rule of the Cartesian product graphs, we deduce that $\{h_{1},h_{2}\}\subseteq N_{H}[y]\cap P_{H}$. This contradicts the fact that $P_{H}$ is a packing in $H$.

Therefore, $P$ is a $2$TLP set in $G\square H$. Hence, $L_{2,t}(G\square H)\geq|P|=L_{2,t}(G)\rho(H)$. Similarly, we have $L_{2,t}(G\square H)\geq \rho(G)L_{2,t}(H)$.

We can show that this bound is sharp in the following way. Let $G'$ be any connected graph on the set of vertices $\{v_{1}',\cdots,v_{n}'\}$. Let $G=G'\odot K_{1}$, in which $N_{G}(v_{i}')\setminus N_{G'}(v_{i}')=\{v_{i}\}$ for each $1\leq i\leq n$. We now consider the graph $G\square K_{r}$ for $r\geq3$, and let $Q$ be an $L_{2,t}(G\square K_r)$-set. It is not difficult to see that $|Q\cap\big{(}\{v_{i},v_{i}'\}\times V(K_{r})\big{)}|\leq2$ for each $1\leq i\leq n$. This implies that
\begin{equation}\label{BJ1}
\begin{array}{lcl}
L_{2,t}(G)&=&|Q|=|Q\cap V(G\square K_{r})|=|Q\cap\big{(}\cup_{i=1}^{n}(\{v_{i},v_{i}'\}\times V(K_{r}))\big{)}|\\
&=&\sum_{i=1}^{n}|Q\cap\big{(}\{v_{i},v_{i}'\}\times V(K_{r})\big{)}|\leq2n=L_{2,t}(K_{r})\rho(G).
\end{array}
\end{equation}
\end{proof}

\begin{theorem}
Let $G$ and $H$ be graphs with $i_{G}$ and $i_{H}$ isolated vertices, respectively. Then,
$$L_{2,t}(G\times H)\geq max~\{\rho_o(G^{-}) L_{2,t}(H^{-}), L_{2,t}(G^{-}) \rho_o(H^{-})\}+i_G|V(H)|+i_H|V(G)|-i_Gi_H$$ and this bound is sharp.
\end{theorem}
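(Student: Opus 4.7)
The plan is to build a large $2$TLP set of $G\times H$ in two layers: a structured set inside $G^{-}\times H^{-}$ obtained as the product of a packing-type set in one factor with a $2$TLP in the other, plus every isolated vertex of $G\times H$, which contributes to the size for free.

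First I would observe that for each $(x,y)\in V(G\times H)$ the open neighborhood factors as $N_{G\times H}((x,y))=N_G(x)\times N_H(y)$; in particular $(x,y)$ is isolated in $G\times H$ if and only if $x$ is isolated in $G$ or $y$ is isolated in $H$. By inclusion--exclusion the number of isolated vertices of $G\times H$ equals $i_G|V(H)|+i_H|V(G)|-i_Gi_H$, and the non-isolated part is precisely $G^{-}\times H^{-}$.

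Next I would let $P_G$ be a $\rho_o(G^{-})$-set and $P_H$ an $L_{2,t}(H^{-})$-set, and set $P=P_G\times P_H$. For any $(x,y)\in V(G^{-}\times H^{-})$,
$$|N_{G\times H}((x,y))\cap P|=|N_{G^{-}}(x)\cap P_G|\cdot|N_{H^{-}}(y)\cap P_H|\leq 1\cdot 2=2,$$
so $P$ is a $2$TLP of $G^{-}\times H^{-}$. Adjoining every isolated vertex of $G\times H$ to $P$ preserves the $2$TLP property, since those vertices appear in no neighborhood; the resulting set is a $2$TLP of $G\times H$ of size $\rho_o(G^{-})L_{2,t}(H^{-})+i_G|V(H)|+i_H|V(G)|-i_Gi_H$. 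The symmetric construction swapping the roles of $G$ and $H$ yields the other summand inside the maximum, completing the inequality.

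For sharpness I would test $G=H=K_{2}$: neither factor has isolated vertices, $\rho_o(K_{2})=L_{2,t}(K_{2})=2$, and $K_{2}\times K_{2}\cong 2K_{2}$ gives $L_{2,t}(K_{2}\times K_{2})=4$, matching $\max\{2\cdot 2,\,2\cdot 2\}+0=4$. Conceptually, the heart of the argument is the neighborhood factorization $N_{G\times H}((x,y))=N_G(x)\times N_H(y)$ (which is what distinguishes the direct product from the Cartesian product in Theorem \ref{Cart1}); it lets the open packing bound of $1$ and the $2$TLP bound of $2$ multiply to exactly the target bound of $2$. The only real obstacle is the careful inclusion--exclusion accounting of isolated vertices, which is where the $i_G|V(H)|+i_H|V(G)|-i_Gi_H$ term arises, and I would double-check the edge cases where $G$ or $H$ is already edgeless.
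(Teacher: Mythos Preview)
Your proposal is correct and follows essentially the same route as the paper: construct $P=P_G\times P_H$ with $P_G$ an open packing and $P_H$ a $2$TLP, then add in all isolated vertices of $G\times H$ via the inclusion--exclusion count. The only cosmetic differences are that you compute $|N((x,y))\cap P|$ directly via the factorization $N_{G\times H}((x,y))=N_G(x)\times N_H(y)$ (the paper argues by contradiction to the same effect), and your sharpness witness $K_2\times K_2$ is the $G=K_2$ instance of the paper's family ``bipartite $G$ without isolated vertices times $K_2$''.
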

\begin{proof}
Suppose first that $G$ and $H$ are graphs without isolated vertices. Let $P_G$ and $P_H$ be a $\rho_o(G)$-set and an $L_{2,t}(H)$-set, respectively. Set $P=P_G\times P_H$, and assume for the sake of contradiction that $P$ is not a $2$TLP set of $G\times H$. Hence, there exists a vertex $(x,y)\in V(G\times H)$ adjacent to three distinct vertices $(g,h), (g',h'),(g'',h'') \in P$. Then $g=g'=g''$ because $P_G$ is a $\rho_o(G)$-set. So $h\neq h'\neq h''$ and $|N(y)\cap P_H|\geq 3$, a contradiction. Therefore, $P$ is a $2$TLP set in $G\times H$, and $L_{2,t}(G\times H)\geq |P| =\rho_o(G) L_{2,t}(H)$. We have $L_{2,t}(G\times H)\geq L_{2,t}(G) \rho_o(H)$ by a similar fashion.

We have
$$L_{2,t}(G\times H)=L_{2,t}(G^-\times H^-)+i_G|V(H)|+i_H|V(G)|-i_Gi_H.$$
Therefore,
$$L_{2,t}(G\times H)\geq max~\{\rho_o(G^{-}) L_{2,t}(H^{-}), L_{2,t}(G^{-}) \rho_o(H^{-})\}+i_G|V(H)|+i_H|V(G)|-i_Gi_H.$$

In what follows, we show that this bound is sharp. Let $G$ be a bipartite graph without isolated vertices. Then, $L_{2,t}(G\times K_2)=L_{2,t}(2G)=2L_{2,t}(G)$. On the other hand, $L_{2,t}(G\times K_2)\geq max~\{\rho_o(G) L_{2,t}(K_2), L_{2,t}(G) \rho_o(K_2)\}= max~\{2\rho_o(G), 2 L_{2,t}(G)\}=2  L_{2,t}(G)$.
\end{proof}

\begin{theorem}
Let $G$ be a graph of order $n$ with $i_G$ isolated vertices. If $H$ is a rooted graph at $v$, then
$$n(L_{2,t}(H)-1)+i_G \leq L_{2,t}(G\circ_{v} H)\leq n L_{2,t}(H).$$
Furthermore, these bounds are sharp.
\end{theorem}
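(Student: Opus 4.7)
The plan is to handle the two bounds separately and then exhibit sharpness examples.

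For the upper bound, I would fix an $L_{2,t}(G\circ_v H)$-set $B$ and decompose it by $H$-layers. Set $B_i = B \cap V({}^{g_i}\!H)$. For any non-root vertex $(g_i,h)$ with $h\ne v$, all its neighbors in $G\circ_v H$ lie in ${}^{g_i}\!H$, so $|B_i \cap N_H(h)| = |B \cap N((g_i,h))| \le 2$. For the root $(g_i,v)$, its neighborhood in $G\circ_v H$ decomposes as $N_H(v)\cup\{(g_j,v):g_j\in N_G(g_i)\}$, and intersecting with $B_i$ discards the second summand entirely, giving $|B_i\cap N_H(v)|\le |B\cap N((g_i,v))|\le 2$. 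Hence each $B_i$ is a $2$TLP set in the copy of $H$, so $|B_i|\le L_{2,t}(H)$, and summing over the $n$ layers yields $L_{2,t}(G\circ_v H)\le n\,L_{2,t}(H)$.

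For the lower bound, I would build a $2$TLP set explicitly from a fixed $L_{2,t}(H)$-set $S$. In each $H$-copy attached to an isolated vertex of $G$, place the full set $S$; in each $H$-copy attached to a non-isolated vertex of $G$, place $S\setminus\{v\}$ (if $v\notin S$, this is simply $S$, giving us even more). The total cardinality is at least $i_G\,L_{2,t}(H)+(n-i_G)(L_{2,t}(H)-1)=n(L_{2,t}(H)-1)+i_G$. I then verify the $2$TLP property: at a non-root vertex $(g_i,h)$ the neighborhood lies inside its copy, so the bound inherits from $S$; at a root $(g_i,v)$ attached to an isolated vertex the neighborhood also lies inside the copy; and at a root $(g_i,v)$ attached to a non-isolated vertex, the neighbors at other roots $(g_j,v)$ are excluded from $B$ because we removed $v$ from every non-isolated copy, while the in-copy contribution is $|(S\setminus\{v\})\cap N_H(v)|=|S\cap N_H(v)|\le 2$.

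For sharpness, the upper bound is attained by taking $G=\overline{K_n}$ (so $i_G=n$), since $G\circ_v H$ is then the disjoint union of $n$ copies of $H$ and both bounds collapse to $n\,L_{2,t}(H)$. For the lower bound, I would choose $H$ with the property that every $L_{2,t}(H)$-set contains the root $v$, e.g.\ $H=K_2$ rooted at either vertex (where $L_{2,t}(K_2)=2$ and the unique $2$TLP set is $V(K_2)$), and take $G$ with $i_G=0$, say $G=K_n$. Then $G\circ_v K_2=G\odot K_1$, and one checks that at each support vertex at most two pendants can be selected and symmetrically at most two supports, forcing $L_{2,t}(G\circ_v K_2)=n=n(L_{2,t}(H)-1)+i_G$.

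The main obstacle I anticipate is not the upper bound (routine layer-decomposition) but the construction for the lower bound: one must be careful that removing $v$ from the copies at non-isolated vertices is exactly what prevents the root-degree from blowing up when the root has many $G$-neighbors, while simultaneously retaining the isolated-vertex copies at full size. Verifying the $2$TLP condition at root vertices is where all the accounting happens, and is the only place where $i_G$ enters the final count.
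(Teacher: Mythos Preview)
Your arguments for the two inequalities are correct, and in fact your lower-bound construction (placing the full $L_{2,t}(H)$-set in the copies at isolated vertices and $S\setminus\{v\}$ elsewhere) is exactly what is needed to recover the $+\,i_G$ term; the paper's own write-up only exhibits $\bigcup_{g}\{g\}\times(P_H\setminus\{v\})$, which literally yields $n(L_{2,t}(H)-1)$ and leaves the isolated-vertex refinement implicit. Your upper-bound sharpness via $G=\overline{K_n}$ is also fine, though the paper instead fixes $G$ arbitrary and takes an $H$ admitting an $L_{2,t}(H)$-set avoiding the root.

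The genuine gap is your lower-bound sharpness example. With $H=K_2$ rooted at $v$ and $G=K_n$ (so $G\circ_v K_2\cong K_n\odot K_1$), you assert $L_{2,t}(K_n\odot K_1)=n$, but this is false for $n\geq 3$: the set $\{v_1\}\cup\{u_1,\dots,u_n\}$ (one clique vertex together with all leaves) is a $2$TLP set of size $n+1$, since $|N(v_i)\cap B|=|\{v_1,u_i\}|=2$ for $i\neq 1$ and $|N(v_1)\cap B|=|\{u_1\}|=1$. Your informal check (``at most two pendants, at most two supports'') overlooks that each support has only one pendant, so the constraint at $v_i$ still leaves room for a single clique vertex in $B$. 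The paper's route to sharpness is structurally different: it requires that the root $v$ have degree two in the subgraph induced by \emph{every} $L_{2,t}(H)$-set and that every $2$TLP set $P'_H$ satisfy $|P'_H\setminus N_H[v]|\leq L_{2,t}(H)-3$, and then argues that no $H$-layer in $G^-\circ_v H$ can reach $L_{2,t}(H)$ without forcing a neighboring layer down to at most $L_{2,t}(H)-2$. An $H$ as small as $K_2$ cannot satisfy these saturation conditions; something like $P_3$ rooted at its center (where $L_{2,t}(P_3)=3$ and the unique maximum set uses both neighbours of the root) is the right scale of example to try.
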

\begin{proof}
Note that any $H$-layer in $G\circ_{v} H$ is isomorphic to $H$. So, each $L_{2,t}(G\circ_{v} H)$-set intersects every $H$-layer is at most $L_{2,t}(H)$ vertices. Hence, $L_{2,t}(G\circ_{v} H)\leq n L_{2,t}(H)$. On the other side, let $P_H$ be an $L_{2,t}(H)$-set. We can readily observe that $P=\bigcup_{g\in V(G)} \big(\{g\}\times (P_H\backslash\{v\})\big)$ is a $2$TLP set in $G\circ_{v} H$, so we have $n(L_{2,t}(H)-1)\leq|P|\leq L_{2,t}(G\circ_{v} H)$.

Suppose that there exists an $L_{2,t}(H)$-set $P_H$ not consisting of $v$. Notice that $P=\bigcup_{g\in V(G)} \big(\{g\}\times P_H\big)$ is a $2$TLP set in $G\circ_{v} H$, and we have $nL_{2,t}(H)=|P|\leq L_{2,t}(G\circ_{v} H)$. Thus, we conclude that $L_{2,t}(G\circ_{v} H)=nL_{2,t}(H)$ in this case.

Assume now that $v$ has degree two in all subgraphs induced by every $L_{2,t}(H)$-set $P_H$, that is $deg_{H[P_H]}(v)=2$.
Suppose that for every $2$TLP set $P'_H$ in $H$, $|P'_H\backslash N[v]|\leq L_{2,t}(H)-3$.
Assume that $P$ is an $L_{2,t}(G\circ_{v} H)$-set. Note that exactly $i_G$ components of $G\circ_{v} H$ are isomorphic to $H$, which implies that every component has exactly $L_{2,t}(H)$ vertices in $P$. Moreover, we have one component isomorphic to $G^-\circ_{v} H$.
Let $P^-=P\cap (G^-\circ_{v} H)$ and $P^-_g=P^-\cap {}^g\!H$ for every $g \in V(G^-)$. We now show that $L_{2,t}(G^-\circ_{v} H)=(n-i_G) (L_{2,t}(H)-1)$. Assume that $L_{2,t}(G^-\circ_{v} H)>(n-i_G) (L_{2,t}(H)-1)$. Hence, there exists at least one vertex $g_1 \in V(G^-)$ for which $|P^-_{g_1}|=L_{2,t}(H)$. Otherwise, $|P^-|= \sum_{g_i\in V(G^-)}|P^-_{g_i}|\leq \sum_{g_i\in V(G^-)} L(_{2,t}(H)-1)=(n-i_G) (L_{2,t}(H)-1)$, which is a contradiction. 

Since $G^-$ has no isolated vertex, there exists a vertex $g_2\in V(G^-)$ for which $g_1g_2\in E(G)$. If $|P^-_{g_2}|=L_{2,t}(H)$, then both $(g_1,v)$ and $(g_2,v)$ have three neighbors in $P^-$, which is impossible. Therefore $|P^-_{g_2}|\leq L_{2,t}(H)-1$ for all $g_2\in N_{G}(g_1)$. Now let $g_2$ be an arbitrary neighbor of $g_1$ in $G^-$. If $|P^-_{g_2}|=L_{2,t}(H)-1$, then $|N_{(G\circ_{v} H)[{}^{g_2}\!H]}[(g_2,v)]\cap P^-_{g_2}|=2$. This implies that $|N_{(G^-\circ_{v} H)}(g_1,v)\cap P^-|\geq 3$ or $|N_{(G^-\circ_{v} H)}(g_2,v)\cap P^-|\geq 3$, a contradiction. Thus, $|P^-_{g_2}|\leq L_{2,t}(H)-2$ for all $g_2\in N_{G}(g_1)$.

The above argument provides a guarantee that for every vertex $g_1\in V(G)$ such that $|P^-_{g_1}|=L_{2,t}(H)$, we have $|P^-_{g_2}|\leq L_{2,t}(H)-2$ for all $g_2\in N_{G}(g_1)$. This implies that $$L_{2,t}(G^-\circ_{v} H)\leq \frac{n-i_G}{2}L_{2,t}(H)+\frac{n-i_G}{2}(L_{2,t}(H)-2)=(n-i_G)(L_{2,t}(H)-1),$$ which contradicts the assumption $L_{2,t}(G^-\circ_{v} H)>(n-i_G)(L_{2,t}(H)-1)$. So $L_{2,t}(G^-\circ_{v} H)\leq(n-i_G)(L_{2,t}(H)-1)$. It follows that $L_{2,t}(G^-\circ_{v} H)=(n-i_G)(L_{2,t}(H)-1)$ by using the corresponding inequality obtained from the first steps of the proof. Therefore, $L_{2,t}(G\circ_{v} H)=L_{2,t}(G^-\circ_{v} H)+i_GL_{2,t}(H)=(n-i_G)(L_{2,t}(H)-1)+i_GL_{2,t}(H)=n(L_{2,t}(H)-1)+i_G$.
\end{proof}

\begin{theorem}
For any graphs $G$ and $H$, $$L_2(G\square H)\leq \min\{L_2(G)|V(H)|,L_2(H)|V(G)|\},$$ and this bound is sharp.
\end{theorem}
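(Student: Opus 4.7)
The plan is to prove the upper bound by a layer decomposition and verify sharpness with a simple disconnected-$H$ construction.

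For the upper bound, I would let $B$ be an $L_{2}(G\square H)$-set and partition $B$ across the $G$-layers by writing $B=\bigcup_{h\in V(H)}\big(B\cap G^{h}\big)$ (a disjoint union). The key observation is that $G^{h}$ induces a copy of $G$ in $G\square H$ and, for every $(g,h)\in G^{h}$, the closed neighborhood inside this induced subgraph satisfies $N_{G^{h}}[(g,h)]\subseteq N_{G\square H}[(g,h)]$. Therefore the 2LP condition $|B\cap N_{G\square H}[(g,h)]|\leq 2$ forces $|B\cap N_{G^{h}}[(g,h)]|\leq 2$ for every $g\in V(G)$. In other words, $B\cap G^{h}$ is a 2LP set of $G^{h}\cong G$, so $|B\cap G^{h}|\leq L_{2}(G)$.

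Summing over all $h\in V(H)$ gives $|B|\leq L_{2}(G)|V(H)|$, and the symmetric argument using $H$-layers yields $|B|\leq L_{2}(H)|V(G)|$. Taking the minimum of the two yields the claimed inequality.

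For sharpness, I would take $H=\overline{K_{m}}$, the edgeless graph on $m$ vertices. Then $G\square H$ is the disjoint union of $m$ copies of $G$, which immediately gives $L_{2}(G\square H)=mL_{2}(G)=L_{2}(G)|V(H)|$. Since $L_{2}(G)\leq |V(G)|$, we have $L_{2}(G)|V(H)|\leq |V(G)||V(H)|=L_{2}(H)|V(G)|$, so the minimum on the right equals $L_{2}(G)|V(H)|$ and matches the actual value of $L_{2}(G\square H)$. No step requires significant effort: the main ingredient is the one-line containment of closed neighborhoods within a layer, and the sharpness example follows directly from the product structure, so I do not anticipate a substantive obstacle.
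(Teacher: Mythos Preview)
Your upper-bound argument is essentially identical to the paper's: both restrict an $L_{2}(G\square H)$-set to each $G$-layer, observe that the restriction is a $2$LP set of the induced copy of $G$, and sum the resulting bounds (then argue symmetrically). The only difference is the sharpness example: the paper uses the connected product $P_{2}\square K_{m,n}$ (with $m,n\geq2$), where $L_{2}(P_{2})=L_{2}(K_{m,n})=2$ and one checks $L_{2}(P_{2}\square K_{m,n})=4=\min\{2(m+n),4\}$, whereas you take $H=\overline{K_{m}}$ so that $G\square H$ degenerates to $m$ disjoint copies of $G$. Your example is correct and quicker, but it is arguably a trivial instance of the product; the paper's connected example shows the bound is tight even when the Cartesian product genuinely mixes the two factors.
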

\begin{proof}
Let $V(H)=\{v_1, v_2, \cdots, v_{|V(H)|}\}$. It is obvious that $G\square H$ contains $|V(H)|$ disjoint $G$-layers. Suppose now that $P$ is an $L_2(G\square H)$-set, thus $P_i=P\cap G^{v_i}$ is a $2$LP set in $(G\square H)[G^{v_i}]$ for each $1\leq i\leq |V(H)|$. Therefore, $|P_i|\leq L_2(G)$, which leads to $$L_2(G\square H)=|P|=\sum_{i=1}^{|V(H)|} |P_i|\leq L_2(G)|V(H)|.$$
Similarly, we have $L_2(G\square H)\leq L_2(H)|V(G)|$.

For sharpness consider $G=P_2$ and $H=K_{m,n}$ for $m,n\geq2$. We observe that $L_2(K_{m,n})=2$ \cite{gghr}, and $L_2(P_2)=2$. It is easy to see that $L_2(G\square H)=4$.
\end{proof}

\begin{theorem}
Let $G$ and $H$ be graphs with $i_{G}$ and $i_{H}$ isolated vertices, respectively. Then,
\begin{center}
$L_2(G\times H)\geq max~\{\rho_o(G^{-})L_2(H^{-}), L_2(G^{-}) \rho_o(H^{-}), \rho(G^{-}) L_{2,t}(H^{-}), L_{2,t}(G^{-}) \rho(H^{-}) \}$\\
$+i_G|V(H)|+i_H|V(G)|-i_Gi_H$.
\end{center}
Moreover, this bound is sharp.
\end{theorem}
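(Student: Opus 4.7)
The plan is to mirror the structure of the preceding theorem on $L_{2,t}(G\times H)$. First, I reduce to the case where neither $G$ nor $H$ has isolated vertices. A vertex $(g,h)$ is isolated in $G\times H$ iff either $g$ is isolated in $G$ or $h$ is isolated in $H$, so the set of isolated vertices of $G\times H$ has cardinality $i_G|V(H)|+i_H|V(G)|-i_Gi_H$ by inclusion--exclusion, and its complement induces exactly $G^-\times H^-$. Since isolated vertices can always be adjoined to any $2$LP set without violating the condition $|N[v]\cap B|\leq 2$, one obtains
\[
L_2(G\times H)=L_2(G^-\times H^-)+i_G|V(H)|+i_H|V(G)|-i_Gi_H,
\]
so it suffices to establish $L_2(G^-\times H^-)\geq \max\{\rho_o(G^-)L_2(H^-),\, L_2(G^-)\rho_o(H^-),\, \rho(G^-)L_{2,t}(H^-),\, L_{2,t}(G^-)\rho(H^-)\}$ under the assumption that $G,H$ have no isolated vertices.

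Assuming henceforth that $G$ and $H$ are without isolated vertices, I construct four candidate $2$LP sets of the form $P=P_G\times P_H\subseteq V(G\times H)$, corresponding to the pairs $(P_G,P_H)$ chosen respectively as a $\rho_o(G)$-set and an $L_2(H)$-set, an $L_2(G)$-set and a $\rho_o(H)$-set, a $\rho(G)$-set and an $L_{2,t}(H)$-set, and an $L_{2,t}(G)$-set and a $\rho(H)$-set. In each instance I argue by contradiction: suppose some $(x,y)\in V(G\times H)$ satisfies $|N[(x,y)]\cap P|\geq 3$. The direct-product adjacency rule $(g,h)\in N((x,y))$ iff $g\in N(x)$ and $h\in N(y)$ is used throughout, and the analysis splits on whether $(x,y)$ itself lies in $P$.

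In the $(\rho_o,L_2)$-case, three distinct neighbors $(g_i,h_i)\in N((x,y))\cap P$ either have all $g_i$ equal (so $h_1,h_2,h_3$ are distinct elements of $N(y)\cap P_H\subseteq N[y]\cap P_H$, contradicting $|N[y]\cap P_H|\leq 2$) or contain two distinct $g_i$'s in $N(x)\cap P_G$, violating the open-packing property $|N(x)\cap P_G|\leq 1$. When additionally $(x,y)\in P$, so $y\in P_H$, two distinct neighbors in $P$ force $g_1=g_2$ hence $h_1\neq h_2$, and then $\{y,h_1,h_2\}\subseteq N[y]\cap P_H$ yields the contradiction. The $(\rho,L_{2,t})$-case is cleaner: when $(x,y)\in P$ we have $x\in P_G$, and the packing condition $|N[x]\cap P_G|\leq 1$ forces $N(x)\cap P_G=\emptyset$, making the subcase vacuous; when $(x,y)\notin P$, any three neighbors must collapse in the $G$-coordinate by the packing property, after which $|N(y)\cap P_H|\leq 2$ (from $P_H$ being a $2$TLP set) finishes it. The remaining two cases follow by symmetry. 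I expect the main subtlety to lie in the bookkeeping around whether $(x,y)\in P$, and in recognizing precisely how the packing-versus-open-packing distinction is exactly what controls the appropriate constraint on $N(x)\cap P_G$ in the subcase $(x,y)\in P$.

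For sharpness I propose $G=H=K_2$: one has $K_2\times K_2\cong 2K_2$, so $L_2(G\times H)=4$, while the right-hand side equals $\max\{2\cdot 2,\,2\cdot 2,\,1\cdot 2,\,2\cdot 1\}+0=4$, attaining the bound.
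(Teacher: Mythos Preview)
Your proof is correct and follows essentially the same approach as the paper: reduce to isolated-vertex-free factors via the identity $L_2(G\times H)=L_2(G^-\times H^-)+i_G|V(H)|+i_H|V(G)|-i_Gi_H$, then verify by contradiction that each of the four product sets is a $2$LP set using the direct-product adjacency rule together with the appropriate packing/open-packing/$L_2$/$L_{2,t}$ constraint. Your sharpness witness $K_2\times K_2$ is a particular instance of the paper's family (bipartite $G$ with no isolated vertices times $K_2$), and your case analysis is in fact a bit more carefully spelled out than the paper's.
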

\begin{proof}
Assume first that $G$ and $H$ are graphs without isolated vertices. Let $P_G$, $P'_G$, $P_H$ and $P'_H$ be an $L_{2,t}(G)$-set, a $\rho_o(G)$-set, a $\rho(H)$-set and an $L_2(H)$-set, respectively. Set $P=P_G\times P_H$ and $P'=P'_G\times P'_H$, and suppose to the contrary that $P$ and $P'$ are not $2$LP sets of $G\times H$. So there exist vertices $(x,y)$, $(x',y')\in V(G\times H)$ such that $|N[(x,y)]\cap P|\geq3$ and $|N[(x',y')]\cap P'|\geq3$, respectively.

If $(x,y)\in P$, then $(x,y)$ is adjacent to two distinct vertices $(g,h), (g',h')\in P$. So $|N[y]\cap P_H|\geq2$, which is impossible.
If $(x,y)\in V(G\times H)\backslash P$, then $(x,y)$ is adjacent to three distinct vertices $(g,h), (g',h'), (g'',h'') \in P$. We observe that $h=h'=h''$ because $P_H$ is a $\rho(H)$-set. Hence $g\neq g' \neq g''$ and $|N(x)\cap P_G|\geq3$, a contradiction.
Therefore, $P$ is a $2$LP set in $G\times H$ and $L_2(G\times H)\geq |P|\geq L_{2,t}(G) \rho(H)$. We have $L_2(G\times H)\geq  \rho(G)L_{2,t}(H)$ by a similar method.

If $(x',y')\in P'$, then $(x',y')$ is adjacent to two distinct vertices $(g_1,h_1), (g_2,h_2)\in P'$. We have $g_1=g_2$ because $P'_G$ is a $\rho_o(G)$-set. Thus, $h_1\neq h_2$ and $|N[y']\cap P'_H|\geq3$, which is impossible.
If $(x',y')\in V(G\times H)\backslash P'$, then $(x',y')$ is adjacent to three distinct vertices $(g_1,h_1), (g_2,h_2), (g_3,h_3) \in P'$. $g_1=g_2=g_3$ since $P'_G$ is a $\rho_o(G)$-set, so $h_1\neq h_2 \neq h_3$ and $|N[y']\cap P'_H|\geq3$, a contradiction.
Therefore $P'$ is a $2$LP set in $G\times H$, and $L_2(G\times H)\geq \rho_o(G) L_2(H)$. We get $L_2(G\times H)\geq  L_2(G)\rho_o(H)$ by a similar fashion.
Therefore, $$L_2(G\times H)\geq max~\{\rho_o(G)L_2(H), L_2(G) \rho_o(H), \rho(G) L_{2,t}(H), L_{2,t}(G) \rho(H)\}.$$

We now suppose that $G$ and $H$ are arbitrary graphs. Then,
$$L_2(G\times H)=L_2(G^-\times H^-)+i_G|V(H)|+i_H|V(G)|-i_Gi_H \geq$$
$$max~\{\rho_o(G^{-})L_2(H^{-}), L_2(G^{-}) \rho_o(H^{-}), \rho(G^{-}) L_{2,t}(H^{-}), L_{2,t}(G^{-}) \rho(H^{-}) \}+$$
$$i_G|V(H)|+i_H|V(G)|-i_Gi_H.$$

In what follows, we show the sharpness of this bound. Let $G$ be a bipartite graph without isolated vertices. Then, $L_2(G\times K_2)=L_2(2G)=2L_2(G)$. On the other hand,
$$L_2(G\times K_2)\geq max~\{\rho_o(G) L_2(K_2), L_2(G) \rho_o(K_2), \rho(G) L_{2,t}(K_2), L_{2,t}(G) \rho(K_2)\}=$$
$$max~\{2\rho_o(G), 2 L_2(G), 2 \rho (G), L_{2,t}(G)\}=2  L_2(G).$$
\end{proof}

We end this section by studying the $2$LP number of rooted product graphs.
\begin{theorem}
Let $G$ be a graph of order $n$. If $H$ is a graph with root $v$, then\\
$$
L_2(G\circ_{v} H)=
\begin{cases}
L_2(G)+n(L_2(H)-1) \quad \text{if}~v \in P_H~\text{for every}~L_2(H)\text{-set}~P_H,\\
nL_2(H) \qquad ~~~~~~~~~~~~~~~\text{if}~v \notin P_H~\text{for some}~L_2(H)\text{-set}~P_H.
\end{cases}
$$
\end{theorem}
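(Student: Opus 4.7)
The plan is to establish the stated equality by matching upper and lower bounds in each of the two cases. The starting observation, valid in both cases, is that for any $2$LP set $P$ of $G\circ_v H$, the restriction $P_g=P\cap {}^g\!H$ identified with a subset of $V(H)$ is itself a $2$LP of $H$, because ${}^g\!H\cong H$ and no edge of $G\circ_v H$ joins a non-root vertex of ${}^g\!H$ to a vertex outside ${}^g\!H$. Consequently $|P_g|\leq L_2(H)$ for every $g\in V(G)$, and summing over layers gives the universal upper bound $L_2(G\circ_v H)\leq nL_2(H)$.

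If some $L_2(H)$-set $P_H$ omits $v$, I would take such a $P_H$ and set $P=\bigcup_{g\in V(G)}\{g\}\times P_H$. Because no root appears in $P$, the inter-layer edges (which exist only between roots) contribute nothing to the closed-neighborhood count of any vertex of $G\circ_v H$, and the $2$LP condition at every $(g,w)$ reduces to the condition for $P_H$ at $w$ within the copy of $H$ indexed by $g$. Hence $|P|=nL_2(H)$ matches the upper bound, which settles the second branch.

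If every $L_2(H)$-set contains $v$, I would first sharpen the upper bound. Let $P$ be an arbitrary $2$LP of $G\circ_v H$ and set $R=\{g\in V(G):(g,v)\in P\}$. Whenever $g\notin R$, the set $P_g$ is a $2$LP of $H$ avoiding $v$, so by the case hypothesis $|P_g|\leq L_2(H)-1$. The $2$LP constraint at the root $(g,v)$ unpacks to $|N_H[v]\cap P_g|+|N_G(g)\cap R|\leq 2$, which in turn yields
$$|N_G[g]\cap R|\leq 2-|N_H(v)\cap P_g|\leq 2,$$
so $R$ is a $2$LP set of $G$ and $|R|\leq L_2(G)$. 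Summing the layer contributions,
$$|P|\leq |R|L_2(H)+(n-|R|)(L_2(H)-1)=n(L_2(H)-1)+|R|\leq L_2(G)+n(L_2(H)-1).$$

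For the matching lower bound in this branch I would take an $L_2(G)$-set $P_G$ and an $L_2(H)$-set $P_H$ (necessarily containing $v$), and define
$$P=\Big(\bigcup_{g\in V(G)}\{g\}\times(P_H\backslash\{v\})\Big)\cup\{(g,v):g\in P_G\},$$
whose cardinality is exactly $n(L_2(H)-1)+L_2(G)$. The $2$LP condition at non-root vertices of each layer is immediate because both $P_H$ and $P_H\backslash\{v\}$ are $2$LP sets of $H$. The main obstacle, as I see it, is the verification at the root vertices $(g,v)$: the count there combines the $N_H(v)$-contribution inside the layer, the contribution from adjacent roots indexed by $P_G$, and the root itself when $g\in P_G$. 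I would handle this by choosing $P_H$ to minimize $|N_H(v)\cap P_H|$ among all $L_2(H)$-sets and by exploiting that $P_G$ being a $2$LP of $G$ forces $|N_G(g)\cap P_G|\leq 1$ whenever $g\in P_G$, so that the combined count at each root stays at most $2$.
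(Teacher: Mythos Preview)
Your upper-bound arguments and the second-branch lower bound are correct and coincide with the paper's approach almost verbatim: the paper, too, restricts an $L_2(G\circ_v H)$-set to the $H$-layers and to $G^v$, and in Case~2 uses the same product construction $\bigcup_{g}\{g\}\times P_H$.

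The genuine gap is exactly where you flag it, the Case-1 lower bound, and your proposed remedy does not close it. Minimising $|N_H(v)\cap P_H|$ over all $L_2(H)$-sets need not drive that quantity to $0$; if it stays at $1$, then at a root $(g,v)$ with $g\in P_G$ having a $P_G$-neighbour $g'$ you get
\[
|N_{G\circ_v H}[(g,v)]\cap P|
= |N_H(v)\cap P_H| + |N_G[g]\cap P_G|
\ge 1+2=3,
\]
so your set $P$ is not a $2$LP. Concretely, take $H=K_2$ on $\{v,u\}$ with root $v$: the unique $L_2(K_2)$-set is $\{v,u\}$, hence we are in Case~1 and $|N_H(v)\cap P_H|=1$ is forced. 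With $G=K_3$ your $P$ then fails at any root indexed by $P_G$. Worse, no construction can succeed here: the formula predicts $L_2(K_3\circ_v K_2)=L_2(K_3)+3\bigl(L_2(K_2)-1\bigr)=2+3=5$, but $K_3\circ_v K_2$ is the corona $K_3\odot K_1$ on six vertices, and a direct check shows $L_2(K_3\odot K_1)=4$ (the set of the three pendants together with one triangle vertex is a $2$LP of size $4$, while for any $5$-subset some triangle vertex has three closed neighbours in the set). So the stated equality is false in general, and the paper's own proof has the same defect: it asserts ``It can be readily seen that $P$ is a $2$LP set in $G\circ_v H$'' at precisely the point you identified as the obstacle, without the verification that would (and does) fail.
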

\begin{proof}
We consider two cases based  on the membership of $v$ to $L_2(H)$-sets.

\textit{Case 1.} Assume that $v$ belongs to any $L_2(H)$-set $P_H$, and $P'$ be an $L_2(G)$-set. Set $P=(P'\times \{v\})\cup \big(V(G)\times (P_H\backslash\{v\})\big)$. It can be readily seen that $P$ is a $2$LP set in $G\circ_{v} H$. Therefore, $L_2(G\circ_{v} H)\geq |P'\times \{v\}|+|V(G)\times (P_H\backslash\{v\})|=L_2(G)+n(L_2(H)-1)$.

On the other hand, let $B$ be an $L_2(G\circ_{v} H)$-set. Then $B_g=B\cap {}^g\!H$ is a $2$LP set in $(G\circ_{v} H)[{}^g\!H]$ for every $g \in V(G)$. Note that $B_g$ is not an $L_2((G\circ_{v} H)[{}^g\!H])$-set for some $g\in V(G)$ since $v$ belongs to every $L_2(H)$-set. Hence, $|B\cap {}^g\!H|=|B_g|\leq L_2(H)-1$ if $(g,v)\notin B$, which means $|B\cap ({}^g\!H|\backslash \{(g,v)\})|\leq L_2(H)-1$.
Also if $(g,v)\in B$, then $|B\cap ({}^g\!H|\backslash \{(g,v)\})|\leq L_2(H)-1$ as well. In addition, $B\cap G^v$ is a $2$LP set in $(G\circ_{v} H)[G^v]$. Thus, $|B\cap G^v|\leq L_2(G)$, and we have
$$L_2(G\circ_{v} H)=|B|=|B\cap G^v|+\sum_{g\in V(G)}|B\cap({}^g\!H|\backslash \{(g,v)\})|\leq L_2(G)+n(L_2(H)-1).$$
Therefore, $L_2(G\circ_{v} H)=L_2(G)+n(L_2(H)-1)$.

\textit{Case 2.} Assume that there exists an $L_2(H)$-set $P_H$ for which $v\notin P_H$. Let ${}^g\!P_H=\{g\}\times P_H$ for every $g\in V(G)$, and let $P''=\cup_{g\in V(G)}{}^g\!P_H$. We can easily see that $P''$ is a $2$LP set in $G\circ_{v} H$, so $L_2(G\circ_{v} H)\geq|P''|=nL_2(H)$. On the other hand, let $P$ be an $L_2(G\circ_{v} H)$-set. We can easily observe that the set $P_g=P\cap {}^g\!H$ is a $2$LP set in $(G\circ_{v} H)[{}^g\!H]$ for every $g\in V(G)$. So $L_2(H)=L_2(G\circ_{v} H)[{}^g\!H]\geq |P_g|$. Therefore, $L_2(G\circ_{v} H)=|P|=\sum_{g\in V(G)}|P_g|\leq \sum_{g\in V(G)}L_2(H)=nL_2(H)$. This leads to $L_2(G\circ_{v} H)=nL_2(H)$. 
\end{proof}

%%%%%%%%%%%%%%%%%%%%%%%%%%%%%%%%%%%%%%%%%%%%%%%
\section{Results on vertex partitioning into $2$-limited packings}

\begin{theorem}
If $G$ is a graph of order $n\geq 2$, then $\chi_{\times k}(G)\geq2\sqrt n-L_k(G)$.
\end{theorem}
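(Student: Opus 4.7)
The plan is to combine the trivial counting bound $\chi_{\times k}(G)\cdot L_k(G)\geq n$ with the AM-GM inequality applied to the two quantities $\chi_{\times k}(G)$ and $L_k(G)$. This reduces the stated inequality to two elementary facts, with no case analysis or structural argument required.

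First I would fix an optimal $k$LPP of $G$, say $V(G)=V_1\sqcup V_2\sqcup\cdots\sqcup V_m$ with $m=\chi_{\times k}(G)$, where each $V_i$ is a $k$LP set of $G$. By the definition of $L_k(G)$ as the maximum size of a $k$LP set, we have $|V_i|\leq L_k(G)$ for every $i$. Summing,
\[
n=\sum_{i=1}^{m}|V_i|\leq m\cdot L_k(G)=\chi_{\times k}(G)\cdot L_k(G).
\]

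Next I would apply the AM-GM inequality to the positive reals $\chi_{\times k}(G)$ and $L_k(G)$:
\[
\chi_{\times k}(G)+L_k(G)\geq 2\sqrt{\chi_{\times k}(G)\cdot L_k(G)}\geq 2\sqrt{n},
\]
where the last inequality uses the counting bound established above. Rearranging gives $\chi_{\times k}(G)\geq 2\sqrt{n}-L_k(G)$, which is the desired inequality.

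There is no real obstacle here; the only thing worth double-checking is that $L_k(G)\geq 1$ (so that AM-GM is being applied to positive quantities and no division issues arise), which holds for any graph on $n\geq 2$ vertices since any single vertex forms a $k$LP set for $k\geq 1$. The argument therefore is essentially a one-line pigeonhole plus AM-GM.
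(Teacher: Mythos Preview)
Your proof is correct and follows essentially the same approach as the paper: both establish the product bound $\chi_{\times k}(G)\cdot L_k(G)\geq n$ by summing over a minimum $k$LPP, and then deduce $\chi_{\times k}(G)+L_k(G)\geq 2\sqrt{n}$. The only cosmetic difference is that you apply AM--GM directly to the pair $(\chi_{\times k}(G),\,L_k(G))$, whereas the paper first rewrites $L_k(G)\geq n/\chi_{\times k}(G)$ and then minimizes the function $g(x)=x+n/x$; your version is slightly more streamlined but the underlying inequality is identical.
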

\begin{proof}
We first prove that $\chi_{\times k}(G)\times L_k(G)\geq n$. Let $\{B_1, B_2,\cdots, B_{\chi_{\times k}(G)}\}$ be a $k$LPP of $G$. Then,
$$\chi_{\times k}(G)\times L_k(G)=\sum_{i=1}^{\chi_{\times k}(G)} L_k(G)\geq\sum_{i=1}^{\chi_{\times k}(G)} |B_i|=n$$
and equality holds when each set $B_i$ is an $L_k(G)$-set. So $\chi_{\times k}(G)+L_k(G)\geq \chi_{\times k}(G)+\frac{n}{\chi_{\times k}(G)}$.

On the other hand, $\chi_{\times k}(G)\leq \frac {n}{k}$ because every subset of $V(G)$ of cardinality at most $k$ is a $k$LP set.
We observe that the function $g(x)=x+\frac {n}{x}$ is decreasing for $1\leq x\leq \sqrt n$, and it is increasing for $\sqrt n \leq x \leq \frac {n}{k}$. Therefore $\chi_{\times k}(G)+L_k(G)\geq 2\sqrt n$.

This bound is sharp for the complete graph $K_4$, the cycle $C_4$ and the star $S_4$.
\end{proof}

The \textit{corona product} of two graphs $G$ with $V(G)={v_1, . . . , v_n}$ and $H$ is defined as the graph created by taking one copy of $G$, $|V(G)|$ copies of $H$ and joining $v_i \in V(G)$ to any vertex in the $i^{th}$ copy of $H$. The corona product of the graphs $G$ and $H$ is denoted by $G\odot H$.

If $v$ is a vertex of maximum degree in $G$, then $N_{G\odot H}[v]=\Delta(G)+1+|V(H)|$. So we need at least $\lceil \frac{\Delta(G)+1+|V(H)|}{2}\rceil$ $2$-limited packing sets in every 2LPP of $G\odot H$. Therefore $\chi_{\times 2}(G\odot H)\geq \lceil \frac{\Delta(G)+1+|V(H)|}{2}\rceil$.

\begin{theorem}
If $G$ and $H$ are two graphs, then
$$\chi_{\times 2}(G\odot H) \in \{\chi_{\times 2}(G), \chi_{\times 2}(G)+1, \chi_{\times 2}(G)+2, \cdots, \chi_{\times 2}(G)+\lceil \frac{|V(H)|}{2}\rceil\}.$$
\end{theorem}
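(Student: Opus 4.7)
The plan is to prove the two inclusions separately: $\chi_{\times 2}(G\odot H)\geq \chi_{\times 2}(G)$ (the lower end of the interval) and $\chi_{\times 2}(G\odot H)\leq \chi_{\times 2}(G)+\lceil |V(H)|/2\rceil$ (the upper end). No intermediate value in between requires its own argument, because any integer between these bounds is a legal value for the cardinality of a partition; the theorem only asserts membership in the interval.

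For the lower bound I would start from an optimal $2$LPP $\{B_1,\dots,B_m\}$ of $G\odot H$ with $m=\chi_{\times 2}(G\odot H)$ and simply restrict to $V(G)$, setting $A_i=B_i\cap V(G)$. The family $\{A_i\}$ is clearly a partition of $V(G)$, so the key check is that each $A_i$ is a $2$LP set of $G$. Since $G$ is an induced subgraph of $G\odot H$, for every $v\in V(G)$ one has $N_G[v]\subseteq N_{G\odot H}[v]$, so $|A_i\cap N_G[v]|\leq |B_i\cap N_{G\odot H}[v]|\leq 2$. After discarding empty $A_i$'s one obtains a $2$LPP of $G$ with at most $m$ parts, giving $\chi_{\times 2}(G)\leq m$.

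For the upper bound I would construct an explicit $2$LPP. Let $\{A_1,\dots,A_{\chi_{\times 2}(G)}\}$ be a $2$LPP of $G$; each $A_j$ is also a $2$LP set of $G\odot H$ because $A_j\subseteq V(G)$, and for any $v_i\in V(G)$ the extra neighbors in $N_{G\odot H}[v_i]\setminus N_G[v_i]$ lie in the $i$-th copy of $H$ (so contribute nothing to $A_j$), while for any $w$ in a copy of $H$ the intersection $A_j\cap N_{G\odot H}[w]$ contains at most the single vertex $v_i$. Next, partition $V(H)$ into $\lceil |V(H)|/2\rceil$ blocks $P_1,\dots,P_{\lceil|V(H)|/2\rceil}$ of size at most $2$, and for each $k$ let $B_k$ be the union, over all copies of $H$ in $G\odot H$, of the image of $P_k$. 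I would verify that each $B_k$ is a $2$LP set: for a center vertex $v_i$, $B_k\cap N_{G\odot H}[v_i]$ equals the image of $P_k$ inside the $i$-th copy of $H$, hence has size at most $2$; for a vertex $w$ inside some copy $H_i$, $N_{G\odot H}[w]=N_{H_i}[w]\cup\{v_i\}$, and $B_k$ avoids $v_i$, so $|B_k\cap N_{G\odot H}[w]|\leq |P_k|\leq 2$. Thus $\{A_1,\dots,A_{\chi_{\times 2}(G)},B_1,\dots,B_{\lceil|V(H)|/2\rceil}\}$ is a valid $2$LPP and the upper bound follows.

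Neither direction is genuinely hard; the only point that requires care is the neighborhood bookkeeping in the upper bound, where one must notice that the $A_j$'s consume the slot in each $N_{G\odot H}[v_i]$ corresponding to $V(G)$ only, while the $B_k$'s consume slots corresponding to $V(H_i)$ only, so the two families never interact badly. Once this separation is made explicit, the verification is immediate.
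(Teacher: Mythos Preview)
Your proof is correct and follows essentially the same approach as the paper: both obtain the lower bound from the fact that $G$ is an induced subgraph of $G\odot H$, and both obtain the upper bound by extending an optimal $2$LPP of $G$ with $\lceil |V(H)|/2\rceil$ additional classes that cover the copies of $H$ in pairs. Your verification of the upper bound is in fact tidier than the paper's, which splits into parity cases unnecessarily.

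The one substantive difference is that the paper, after establishing the two bounds, goes on to exhibit explicit families ($G=K_{a,a}$, $H=\overline{K_{a+b-1}}$) realizing each value in the interval, i.e.\ it treats the theorem as also asserting that every listed value actually occurs for some choice of $G$ and $H$. You read the statement purely as an interval membership claim for fixed $G,H$, which is the literal content of the displayed formula and is fully handled by your two inequalities. If the intended meaning includes realizability of every intermediate value, then your proposal is missing that construction; otherwise your argument is complete.
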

\begin{proof}  
Let $V(G)=\{v_1, v_2, \cdots, v_n\}$ and $V(H)=\{u_1, u_2, \cdots, u_{n'}\}$, and let $\mathbb{P}=\{P_1, P_2, \cdots, P_{\chi_{\times 2}(G)}\}$ be a $2$LPP of $G$.
% (with minimum cardinality)

Since $G$ is a subgraph of $G\odot H$, $\chi_{\times 2}(G)\leq \chi_{\times 2}(G\odot H)$. That $\chi_{\times 2}(G\odot H)=\chi_{\times 2}(G)$ can be seen as follows. If $|N_G[v_i]\cap(\cup_{j=1}^{\chi_{\times 2}(G)} P_j)|\leq 2\chi_{\times 2}(G)-n'$ for each $v_i\in V(G)$, then we place the vertices of each copy of $H$ in the members of $\mathbb{P}$ such that $|N_{G\odot H}[v_i]\cap P_j)|\leq 2$ for each $1\leq i \leq n$ and $1\leq j \leq \chi_{\times 2}(G)$. So the equality holds.

We now have two cases based on the behavior of $|V(H)|$ and prove the upper bound.\\
$\bullet$ Let $|V(H)|$ be even. In the worst case, if there exists a vertex $v_i\in V(G)$ such that $2\chi_{\times 2}(G)-1\leq |N_G[v_i]\cap(\cup_{j=1}^{\chi_{\times 2}(G)} P_j)|\leq 2\chi_{\times 2}(G)$, then we add new sets $P_{\chi_{\times 2}(G)+1}, P_{\chi_{\times 2}(G)+2},\cdots, P_{\chi_{\times 2}(G)+\lceil \frac{|V(H)|}{2}\rceil}$ to $\mathbb{P}$ and put the vertices of each copy of $H$ in these sets two by two until there are no vertices left. Therefore $\chi_{\times 2}(G\odot H)=\chi_{\times 2}(G)+\lceil \frac{|V(H)|}{2}\rceil$.\\
$\bullet$ Let $|V(H)|$ be odd. If there exists a vertex $v_i\in V(G)$ such that $|N_G[v_i]\cap(\cup_{j=1}^{\chi_{\times 2}(G)} P_j)|= 2\chi_{\times 2}(G)$, then $\chi_{\times 2}(G\odot H)=\chi_{\times 2}(G)+\lceil \frac{|V(H)|}{2}\rceil$ as before.
Hence $\chi_{\times 2}(G)\leq \chi_{\times 2}(G\odot H)\leq \chi_{\times 2}(G)+\lceil \frac{|V(H)|}{2}\rceil$.

In what follows, we show that $\chi_{\times 2}(G\odot H)$ can take all values between $\chi_{\times 2}(G)$ and $\chi_{\times 2}(G)+\lceil \frac{|V(H)|}{2}\rceil$. It is enough to consider the graph $G$ as $K_{a,a}$ and graph $H$ as $\overline {K_{a+b-1}}$ for $a\geq 1$ and $b\geq 0$. Let $\mathbb{P}=\{P_1, P_2, \cdots, P_{\chi_{\times 2}(G)}\}$ be a $2$LPP of $G$. If $v_i\in V(G)$ belongs to $P_j$, then $v_i$ has the label $j$. We first show that $\chi_{\times 2}(G\odot H)=\chi_{\times 2}(G)+\lceil \frac{b}{2}\rceil$. We assign the labels from $\{1,2, \cdots, a\}$ to the vertices of $G$ as shown in Figure \ref{H}, which is equivalent to a $2$LPP for $G$ with the smallest cardinality.

\begin{figure}[h]
\centering
\begin{tikzpicture}[scale=.5, transform shape]
\node [draw, shape=circle] (1) at (0,0) {};
\node [draw, shape=circle] (2) at (2,0) {};
\node [draw, shape=circle] (3) at (4,0) {};
\node [draw, shape=circle] (4) at (6,0) {};
\node [draw, shape=circle] (a) at (10,0) {};
\node [draw, shape=circle] (1') at (0,-2) {};
\node [draw, shape=circle] (2') at (2,-2) {};
\node [draw, shape=circle] (3') at (4,-2) {};
\node [draw, shape=circle] (4') at (6,-2) {};
\node [draw, shape=circle] (a') at (10,-2) {};

\node [scale=1.5] at (0,0.5) {$1$};
\node [scale=1.5] at (2,0.5) {$2$};
\node [scale=1.5] at (4,0.5) {$3$};
\node [scale=1.5] at (6,0.5) {$4$};
\node [scale=1.5] at (10,0.5) {$a$};
\node [scale=1.5] at (0,-2.5) {$1$};
\node [scale=1.5] at (2,-2.5) {$2$};
\node [scale=1.5] at (4,-2.5) {$3$};
\node [scale=1.5] at (6,-2.5) {$4$};
\node [scale=1.5] at (10,-2.5) {$a$};

\draw(1)--(1');
\draw(1)--(2');
\draw(1)--(3');
\draw(1)--(4');
\draw(1)--(a');
\draw(2)--(1');
\draw(2)--(2');
\draw(2)--(3');
\draw(2)--(4');
\draw(2)--(a');
\draw(3)--(1');
\draw(3)--(2');
\draw(3)--(3');
\draw(3)--(4');
\draw(3)--(a');
\draw(4)--(1');
\draw(4)--(2');
\draw(4)--(3');
\draw(4)--(4');
\draw(4)--(a');
\draw(a)--(1');
\draw(a)--(2');
\draw(a)--(3');
\draw(a)--(4');
\draw(a)--(a');

\end{tikzpicture}
\caption{the labeling of $V(G)$.}\label{H}
\end{figure}
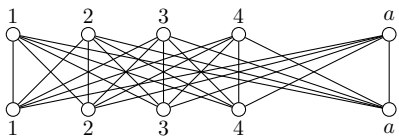 

We have $N_{G\odot H}[v_i]=N_G(v_i)+1+|V(H)|=2a+b$ for each $v_i\in V(G)$. $(a+1)$ closed neighbors of each $v_i\in V(G)$ are labeled as above. If $v_i\in P_k$, i.e., $v_i$ has label $k$, then $|N_G[v_i]\cap P_k|=2$ and $|N_G[v_i]\cap P_j|=1$ for each $j\neq k$, as we see in figure \ref{H}. It is clear that $j$ has $a-1$ values. We put $(a-1)$ unlabeled neighbors of $v_i$ one by one in the sets $P_j$ with the previous condition. Thus, $b$ vertices are still unlabeled. We consider the new labels $a+1, a+2, \cdots, a+\lceil \frac{b}{2}\rceil$, and then label the remaining $b$ vertices two by two with them. Therefore, $\chi_{\times 2}(G\odot H)=\chi_{\times 2}(G)+\lceil \frac{b}{2}\rceil$.

 Note that if $a=1$ and $b\geq 1$, then $\chi_{\times 2}(G\odot H)=\chi_{\times 2}(G)+\lceil \frac{b+1-1}{2}\rceil=\chi_{\times 2}(G)+\lceil \frac{|V(H)|}{2}\rceil$. If $b=0$, then $\chi_{\times 2}(G\odot H)=\chi_{\times 2}(G)$.
\end{proof}

%%%%%%%%%%%%%%%%%%%%%%%%%%%%%%%%%%%%%%%%%%%%%%%

\section{Concluding remarks}

We have bounded $\chi_{\times2}$ for some product graphs in our paper. It may be of interest to provide the exact value of this parameter and either improve or bound it for other product graphs.\vspace{4.5mm}

%%%%%%%%%%%%%%%%%%%%%%%%%%%%%%%%%%%%%%%%%%%%%%%%%%%%%%

\end{document}